\definecolor{CKColor}{RGB}{255,0,0}
\newtheorem{assumption}{Assumption}[section]
\newcounter{remark-count}
\newenvironment{remark}[1][Remark]{ \refstepcounter{remark-count} \begin{trivlist} \item[\hskip \labelsep{ \hskip 0.5cm  \itshape #1 \thesection.\arabic{remark-count}.}]   }{\end{trivlist}}
\numberwithin{remark-count}{section}
\numberwithin{algorithm}{section}
\title{Comparative Convergence Analysis of Nonlinear AMLI-cycle Multigrid \thanks{This work was performed under the auspices of the U.S. Department of Energy by Lawrence Livermore National Laboratory under Contract DE-AC52-07NA27344.}}
\author{Xiaozhe Hu \thanks{Department of Mathematics, The
  Pennsylvania State University, University Park, PA 16802, U.S.A. (\tt hu\_x@math.psu.edu)}
        \and Panayot S. Vassilevski \thanks{Center for Applied Scientific Computing,
             Lawrence Livermore National Laboratory,
             P.O. Box 808, L-560,
             Livermore, CA 94550, U.S.A. (\tt panayot@llnl.gov). }
        \and Jinchao Xu \thanks{Department of Mathematics, The
  Pennsylvania State University, University Park, PA 16802, U.S.A. ({\tt xu@math.psu.edu}). The research of this author was supported in part by NSF Grant DMS-0749202 and DMS-0915153, DOE Grant DE-SC0006903, DOE Subcontracts B574178 and B591217, and NSFC Grant 91130011/A0117}
        }
\date{March 25, 2011--beginning; Today is \today}
\begin{document}

\maketitle

\begin{abstract}
The main purpose of this paper is to provide a comprehensive convergence analysis of nonlinear AMLI-cycle multigrid method for symmetric positive definite problems. Based on classical assumptions for approximation and smoothing properties, we show that the nonlinear AMLI-cycle MG method is uniformly convergent. Furthermore, under only the assumption that the smoother is convergent, we show that the nonlinear AMLI-cycle method is always better (or not worse) than the respective V-cycle MG method. Finally, numerical experiments are presented to illustrate the theoretical results.
\end{abstract}

\begin{keywords} 
Multigrid, nonlinear AMLI-cycle Multigrid, nonlinear preconditioned conjugate gradient method
\end{keywords}

\pagestyle{myheadings}
\thispagestyle{plain}
\markboth{X. Hu, P.~S. Vassilevski, and J.Xu}{Convergence Analysis of Nonlinear AMLI-cycle MG}

\section{Introduction} \label{sec:intro}
In this paper, we consider the large-scale sparse linear system of equations
\begin{equation} \label{eqn:model}
Au=f,
\end{equation}
where~$A$~is a symmetric positive definite (SPD) operator on a finite-dimensional vector space $V$. The development of efficient and practical solvers for large-scale sparse linear systems of equations arising from discretizations of partial differential equations (PDEs) is an important task in scientific and engineering computing. We consider an iterative solution for equation~\eqref{eqn:model} using a multigrid (MG) method.  Efficient, scalable, and often computationally optimal, MG methods have been used successfully in practical applications.  In fact, there is extensive literature on MG methods; see \cite{Hackbusch.W1985, Xu.J1989, Xu.J1992, Bramble.J1993, Briggs.W;McCormick.S2000, Trottenberg.U;Oosterlee.C;Schuller.A2001,Xu.J;Zikatanov.L2002, Vassilevski.P2008}, and references therein for details.  MG methods, especially their algebraic variants, algebraic multigrid (AMG) methods, are being increasingly used in practice. 
Originating in \cite{Brandt.A;McCormick.S;Ruge.J1985}, AMG method gained some popularity after \cite{Ruge.J;Stuben.K1987} appeared.  And more recently, researchers have further extended these methods and developed them in various directions (\cite{Vanek.P;Mandel.J;Brezina.M1996,Brezina.M;Cleary.A;Falgout.R;Henson.V;Jones.J;Manteuffel.T;McCormick.S;Ruge.J2001,Chartier.T;Falgout.R;Henson.V;Jones.J;Manteuffel.T;McCormick.S;Ruge.J;Vassilevski.P2003, Xu.J;Zikatanov.L2004, Brezina.M;Falgout.R;MacLachlan.S;Manteuffel.T;McCormick.S;Ruge.J2005,Lashuk.I;Vassilevski.P2008}, etc.). 

In order to improve the robustness of (A)MG methods, we usually use them as preconditioners in Krylov subspace iterative methods, such as the conjugate gradient (CG) method 
in the case when  $A$ is SPD.

The performance and efficiency of MG methods may degenerate when the physical and geometric properties of the problems become more and more complicated. Generally speaking, if the convergence factor of the two-grid method is too large, the fast convergence of the MG methods, which is expected to be independent of the levels, cannot be guaranteed with either the standard V-cycle or even with the standard W-cycle.  The multilevel cycle, which uses the best polynomial approximation of degree $n$ to define the coarse-level solver, was originally introduced in \cite{Axelsson.O;Vassilevski.P1989a,Axelsson.O;Vassilevski.P1990,Vassilevski.P1992b} and applied to the hierarchical basis MG method. This cycle, usually referred to as the algebraic multilevel iteration (AMLI) cycle, is designed to provide an optimal condition number,  if the degree $n$ of the polynomial is sufficiently large, 
under the assumption that the V-cycle MG method has bounded condition number that depends entirely on the difference of levels.
This assumption (on the bounded level length V-cycle convergence) is feasible for certain second-order elliptic PDEs without additional  assumptions in regard to PDE regularity. 

More recently, thanks to the introduction of the nonlinear (variable-step/flexible) preconditioning method and the analysis of it in \cite{Axelsson.O;Vassilevski.P1991a} (see also \cite{Golub.G;Ye.Q1999a,Notay.Y2000,Saad.Y2003}, etc.), nonlinear multilevel preconditioners were proposed and additive version of them analyzed in \cite{Axelsson.O;Vassilevski.P1994}. 
Furthermore, the multiplicative version was investigated in \cite{Kraus.J2002}. In these nonlinear multilevel preconditioners, $n$ steps of a preconditioned CG iterative method replace the 
best polynomial approximation and are performed to define the coarse-level solvers. The condition number is optimal for  properly chosen $n>1$. 
The same idea can be used to define the MG cycles, as shown for the first time in~\cite{Vassilevski.P2008}. The resulting nonlinear AMLI-cycle MG was analyzed in \cite{Notay.Y;Vassilevski.P2008} (see also \cite{Vassilevski.P2008}). In the nonlinear AMLI-cycle MG, $n$ steps of the CG method with the MG on coarser level as a preconditioner are applied to define the coarse-level solver. Under the assumption that the convergence factor of the V-cycle MG with bounded-level difference is bounded, the uniform convergence property of the nonlinear AMLI-cycle MG methods is shown, if $n$ is chosen to be sufficiently large.

As we can see, the parameter $n$ plays an important role in both the linear and nonlinear AMLI-cycle MG methods.  This parameter must be large enough to guarantee the uniform convergence even for problems with full regularity according to the theoretical results. However, we can expect uniform convergence in such cases for any $n \in \mathbb{Z}^{+}$, especially $n=1$, which partly  motivated the present work.
More specifically, we provide such a uniform convergence analysis of the nonlinear AMLI-cycle MG method. Under the standard assumptions for approximation and smoothing properties, we show that both the nonsymmetric (without post-smoothing) and the symmetric (with both pre and post-smoothing) nonlinear AMLI-cycle MG method converge uniformly for any $n \geq 1$, i.e.,
\begin{equation*}
\| v - \hat{B}^{ns}_k[A_kv] \|_{A_k}^2 \leq \delta \| v \|_{A_k}^2,  \qquad
\| v - \hat{B}_k[A_kv] \|_{A_k}^2  \leq \delta \| v \|_{A_k}^2,  
\end{equation*}
where $\hat{B}^{ns}_k$ and $\hat{B}_k$, defined by Algorithms~\ref{alg:ns-AMLI}~and~\ref{alg:AMLI} below, denote the nonsymmetric and symmetric nonlinear AMLI-cycle MG methods, respectively, and where the constant $0 < \delta <1$ is independent of level $k$.  We also prove the same uniform convergence under the assumption used in \cite{Notay.Y;Vassilevski.P2008}, i.e. the boundedness of the V-cycle MG  method with bounded-level difference. Via this proof, we generalized the results in~\cite{Notay.Y;Vassilevski.P2008}, which
show only that Krylov subspace iterative methods using $\hat{B}^{ns}_{k}[\cdot]$ and $\hat{B}_{k}[\cdot]$ as preconditioners converges uniformly.  This means that all the recursive calls of the Krylov subspace iterative method can only be performed on the coarse levels. On the finest level, we can just perform the smoothing steps and still have a uniformly convergent method. On the other hand, similar to MG methods, without the approximation and smoothing properties, we are not able to show uniform convergence for nonlinear AMLI-cycle MG.  However, we can compare the nonlinear AMLI-cycle MG method with V-cycle MG method, and show that nonlinear AMLI-cycle MG method is always better than the corresponding V-cycle MG method for any $n \geq 1$. For the nonsymmetric case, we can show that
\begin{equation*} 
\| v - \tilde{B}^{ns}_k[A_kv] \|_{A_k} \leq  \| v -  B^{ns}_kA_kv \|_{A_k}
\end{equation*}
where $B^{ns}_k$ denotes the nonsymmetric V-cycle MG (without post-smoothing), i.e., the~$\backslash$-cycle. For the symmetric case, under the assumption that the smoother is convergent in the~$\| \cdot \|_{A_k}$ norm, we have
\begin{equation*}
( v - \tilde{B}_k[A_kv], v)_{A_k} \leq  (v -  B_kA_kv, v)_{A_k},
\end{equation*}
where $B_k$ denotes the V-cycle MG. The above inequality is based on an important property of the full version of nonlinear preconditioned conjugate gradient (PCG) method; i.e., the residual of the current iteration is orthogonal to all the previous search directions. However, this property fails for the truncated version of the nonlinear PCG method. Therefore, the full version nonlinear PCG should be used to define the coarse level solver in the nonlinear AMLI-cycle MG method rather than the steepest descent method or any truncated version of the nonlinear PCG .  We also compare the nonlinear AMLI-cycle MG method with the corresponding $n$-fold V-cycle MG method and show that the nonlinear AMLI-cycle is always at least as good as and usually better than the $n$-fold V-cycle MG method in terms of the bounds on the convergence rate. 

The rest of the paper is organized as follows. In section~2, we introduce the nonlinear AMLI-cycle MG algorithms and the basic assumptions. The main results, the comparison theorem and the uniform convergence of the nonlinear AMLI-cycle MG method are presented in section~3. In section~4, the numerical experiments and the results that illustrate our theoretical results are presented.

\section{Preliminaries} \label{sec:pre}
Let~$V$~be a linear vector space, and let~$(\cdot, \cdot)$~denote a given inner product on~$V$, the induced norm of which is~$\| \cdot \|$. The adjoint of~$A$~with respect to~$(\cdot, \cdot)$, denoted by~$A^t$, is defined by~$(Au,v) = (u,A^tv)$~for all~$u,v\in V$. $A$~is SPD if~$A^t=A$~and~$(Av,v) > 0$~for all~$v \in V\backslash\{0\}$. As~$A$~is SPD with respect to~$(\cdot,\cdot)$, then~$(A\cdot,\cdot)$~defines another inner product on~$V$, denoted by~$(\cdot,\cdot)_{A}$, the induced norm of which is~$\| \cdot \|_{A}$.
\subsection{Multigrid}
Let us first introduce the standard V-cycle MG method. Here, we consider the MG methods that are based on a nested sequence of the subspaces of~$V$:
$
V_1 \subset V_2 \subset  \cdots \subset V_J = V. 
$ 
Corresponding to these spaces, $Q_k, P_k : V \rightarrow V_k$ are defined as the orthogonal projections with respect to~$(\cdot, \cdot)$~and~$(\cdot,\cdot)_A$, respectively, and define~$A_k: V_k \rightarrow V_k$ by $(A_ku_k,v_k) = (u_k,v_k)_A$ for~$u_k,v_k \in V_k$. Note that $A_k$ is also SPD; therefore, $A_k$~defines an inner product denoted by~$(\cdot,\cdot)_{A_k}$ on~$V_k$, the induced norm of which is~$\| \cdot \|_{A_k}$. We also introduce a smoother, $R_k: V_k \rightarrow V_k$, which is necessary to define the multigrid method.

Now we define the nonsymmetric multigrid iterator $B_k^{ns}$ (without post-smoothing) by the following recursive algorithm:

\begin{algorithm}[htbp]
\caption{$\backslash$-cycle MG:~$B_k^{ns}$} \label{alg:half-cycle}
\begin{flushleft}
 Let~$B_1^{ns} = A_1^{-1}$, and assume that~$B_{k-1}^{ns}:V_{k-1} \rightarrow V_{k-1}$~has been defined; therefore, for~$f \in V_k$, $B_k^{ns}: V_k \rightarrow V_k$~is defined as follow:
\end{flushleft}
\begin{enumerate}
\item [] {\bf Pre-smoothing:} $u_1 = R_k f$;
\item [] {\bf Coarse-grid correction:} $B_k^{ns}f := u_1 +  B_{k-1}^{ns}Q_{k-1}(f-A_k u_1)$.
\end{enumerate}
\end{algorithm}

Similarly, we can also define the (symmetric) V-cycle multigrid operator $B_k$ recursively, as  shown in Algorithm \ref{alg:V-cycle}. 
\begin{algorithm}[htpb!]
\caption{ V-cycle MG:~$B_k$} \label{alg:V-cycle}
\begin{flushleft}
 Let~$B_1 = A_1^{-1}$, and assume that~$B_{k-1}:V_{k-1} \rightarrow V_{k-1}$~has been defined; therefore, for~$f \in V_k$, $B_k: V_k \rightarrow V_k$~is defined as follow:
\end{flushleft}
\begin{enumerate}
\item [] {\bf Pre-smoothing} $u_1 = R_k f$;
\item [] {\bf Coarse-grid correction} $u_2= u_1 +  B_{k-1}Q_{k-1}(f-A_k u_1)$;
\item [] {\bf Post-smoothing} $B_kf:=u_2 + R_k^t(f-A_ku_2)$.
\end{enumerate}
\end{algorithm}

\subsection{Nonlinear preconditioned conjugate gradient method}
In order to introduce the nonlinear AMLI-cycle MG method, it is necessary to introduce the nonlinear PCG method, which is a simplified version (available for SPD $A_k$) of the algorithm originated in~\cite{Axelsson.O;Vassilevski.P1991a}. The original version 
in~\cite{Axelsson.O;Vassilevski.P1991a} was meant for more general cases, including nonsymmetric and possibly indefinite matrices. 
Let~$\hat{B}_k[\cdot]: V_k \rightarrow V_k$ be a given nonlinear operator intended  to approximate the inverse of~$A_k$. We now formulate the nonlinear PCG method used to provide an iterated approximate inverse to~$A_k$ based on the given nonlinear operator~$\hat{B}_k[\cdot]$. This procedure gives another nonlinear operator~$\tilde{B}^{[n]}_k[\cdot]: V_k \rightarrow V_k$, which can be viewed as an improved approximation of the inverse of~$A_k$.

\begin{algorithm}[htbp!]
\caption{Nonlinear PCG Method} \label{alg:nonlinear-CG}
\begin{flushleft}
Assume we are given a nonlinear operator~$\hat{B}_k[\cdot]$~to be used as a preconditioner.
Then, $\forall f \in V_k$, $\tilde{B}^{[n]}_k[f]$~is defined as follows:
\begin{enumerate}
\item [] Step 1. Let~$u_0 = 0$~and~$r_0 = f$. Compute~$p_0 = \hat{B}_k[r_0]$. Then let
\[
u_1 = \alpha_0 p_0, \ \text{and} \ r_1=r_0 - \alpha_0A_kp_0, \ \text{where} \ \alpha_0 = \frac{(r_0, p_0)}{(p_0,p_0)_{A_k}}.
\] 
\item [] Step 2. For~$i=1,2,\cdots,n-1$, compute the next conjugate direction
\begin{equation}\label{eqn:conjugate_direction}
p_i = \hat{B}_k[r_i] + \sum_{j=0}^{i-1} \beta_{i,j} p_j, \ \text{where} \ \beta_{i,j} = -\frac{(\hat{B}_k[r_i], p_j)_{A_k}}{(p_j,p_j)_{A_k}}.
\end{equation}
Then the next iterate is
\begin{equation}\label{eqn:ncg_iterate}
u_{i+1} = u_{i} + \alpha_i p_i, \ \text{where} \ \alpha_i = \frac{(r_i, p_i)}{(p_i,p_i)_{A_k}}, 
\end{equation}
and the corresponding residual is
\begin{equation}  \label{eqn:ncg_residual}
r_{i+1} = r_{i} - \alpha_i A_k p_i.
\end{equation}

\item [] Step 3. Let~$\tilde{B}^{[n]}_k[f] := u_n$.
\end{enumerate}
\end{flushleft}
\end{algorithm}

Algorithm~\ref{alg:nonlinear-CG} defines the nonlinear operator $\tilde{B}_k^{[n]}[\cdot]$.  In the rest of the paper, for the sake of simplicity, we will drop the superscript $[n]$ and use $\tilde{B}_k[\cdot]$ instead.  This simplified notation indicates that $n$ steps of the nonlinear PCG are performed. 

\begin{remark} \label{rem:1iter_CG}
If we apply only one step of the nonlinear PCG method, we can see that $ \tilde{B}_k[f] = \alpha \hat{B}_k[f]$~where $\alpha = (\hat{B}_k[f],f)/\| \hat{B}_k[f]  \|^2_{A_k}$. 
That is, $\tilde{B}_k[f]$~differs from~$\hat{B}_k[f]$ by a scalar factor.
\end{remark}

\begin{remark} \label{rem:CG_prop}
Due to the choice of $\beta_{i,j}$, it is easy to see that the new direction $p_i$ is $A_k$-orthogonal
 to all the previous directions $p_j$, $j=0, 1, \cdots, i-1$, i.e., 
\begin{equation}\label{eqn:p_A-orthogonal}
(p_i, p_j)_{A_k} = 0, \ j = 0,1,2, \cdots, i-1.
\end{equation}
Due to this property of the direction ${p_i}$ and to the choice of $\alpha_i$, from~\eqref{eqn:ncg_residual}, it is straightforward to see that 
\begin{equation} \label{eqn:rp_orthogonal}
(r_{i+1}, p_j) = 0, \ j = 0, 1,2, \cdots, i.
\end{equation}
Finally, by~\eqref{eqn:p_A-orthogonal}~and~\eqref{eqn:rp_orthogonal}, we can show that~$u_{i+1}$~computed by~\eqref{eqn:ncg_iterate}~is the solution of the minimization problem
$
\min_{\alpha_{i,j}\in \mathbb{R}} \| f - A_k(u_i + \sum_{j=0}^{i} \alpha_{i,j} p_j)  \|^2_{A_k^{-1}}.
$
Therefore, we have
$
\| f - A_k u_{i+1} \|^2_{A_k^{-1}} \leq \| f - A_k u_i \|^2_{A_k^{-1}}.
$
Then, by induction, we have
\begin{equation} \label{ine:improve}
\| A_k^{-1} f - \tilde{B}_k[f] \|^2_{A_k} \leq \| A_k^{-1}f - \hat{B}_k[f] \|^2_{A_k}.
\end{equation}
This means that $\tilde{B}_k[\cdot]$~is a better approximation to~$A_{k}^{-1}$~than to~$\hat{B}_k[\cdot]$. 
\end{remark}

\begin{remark} \label{rem:truncated}
According to equation~\eqref{eqn:conjugate_direction}, we use all the previous search directions in order to compute the next search direction. The resulting Algorithm~\ref{alg:nonlinear-CG}~is referred to as the full version of the nonlinear PCG method. In practice, due to memory constraints, 
we may want to use a truncated version. Specifically, we only require that the new direction to be 
orthogonal to the $m_i \ge 0$ most recent ones (cf. \cite{Notay.Y2000}). In that case, equation~\eqref{eqn:conjugate_direction}~is replaced by
$p_i = \hat{B}_k[r_i] + \sum_{j=i-1-m_i}^{i-1} \beta_{i,j} p_j$ where $\beta_{i,j} = -\frac{(\hat{B}_k[r_i], p_j)_{A_k}}{(p_j,p_j)_{A_k}}$,
and the resulting algorithm is referred to as the truncated version of the nonlinear PCG method.  A general strategy is  to use $0 \leq m_i \leq m_{i-1} + 1 \leq i-1$
and a typical choice is $m_i = 0$. If $p_i =\hat{B}_k[r_i]$ (i.e., formally $m_i = -1$), 
this choice corresponds to the nonlinear preconditioned steepest descent method. 
In the present multilevel setting, the full version of the  method is acceptable in practice, 
this is because we expect relatively few recursive calls (between the levels), and this happens on coarse levels.
\end{remark}

Assume that $\hat{B}_{k}[\cdot]$ approximates the inverse of $A_k$ with accuracy $\delta \in [0,1)$, i.e.,
\begin{equation} \label{ine:hatB_invA}
\| A_{k}^{-1} f - \hat{B}_{k}[f]\|_{A_k} \leq \delta \| f \|_{A_k^{-1}}.
\end{equation}
Then we have the following convergence result for the nonlinear PCG methods which we will use later.
\begin{theorem}[Theorem 10.2, \cite{Vassilevski.P2008}]\label{thm:converge-NPCG}
Assume that $\hat{B}_k[\cdot]$ satisfies~\eqref{ine:hatB_invA} and that $\tilde{B}_k[\cdot]$ is implemented by $n$ iterations of Algorithm~\ref{alg:nonlinear-CG} with $\hat{B}_k[\cdot]$ as the preconditioner.  Then the following convergence rate estimate holds:
\begin{equation}\label{ine:converge-NPCG}
\| A_{k}^{-1}f - \tilde{B}_k[f]  \|_{A_k} \leq \delta^n \| f \|_{A_k^{-1}}.
\end{equation}
\end{theorem}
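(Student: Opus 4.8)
The plan is to reduce the claim to a one-step contraction and then iterate. First I would pass to residuals: set $r_i = f - A_k u_i$, so that $r_0 = f$ (since $u_0 = 0$) and $\tilde{B}_k[f] = u_n$. Because $A_k$ is SPD, there is an isometry $\| A_k v \|_{A_k^{-1}} = \| v \|_{A_k}$ for every $v \in V_k$, which one checks directly from $\| A_k v \|_{A_k^{-1}}^2 = (A_k^{-1} A_k v, A_k v) = (v, A_k v)$. Applying it with $v = A_k^{-1} f - u_i$ gives $\| A_k^{-1} f - u_i \|_{A_k} = \| r_i \|_{A_k^{-1}}$. Hence the left-hand side of \eqref{ine:converge-NPCG} equals $\| r_n \|_{A_k^{-1}}$ and the right-hand side involves $\| f \|_{A_k^{-1}} = \| r_0 \|_{A_k^{-1}}$, so it suffices to prove the geometric decay $\| r_{i+1} \|_{A_k^{-1}} \le \delta \, \| r_i \|_{A_k^{-1}}$ for $i = 0, 1, \ldots, n-1$.

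For the per-step estimate I would invoke the minimization property of the full nonlinear PCG recorded in Remark~\ref{rem:CG_prop}: the iterate $u_{i+1}$ minimizes $\| f - A_k w \|_{A_k^{-1}}^2$ over the affine set $w \in u_i + \mathrm{span}\{p_0, \ldots, p_i\}$. Because the new direction satisfies $p_i = \hat{B}_k[r_i] + \sum_{j < i} \beta_{i,j} p_j$, the raw preconditioned residual $\hat{B}_k[r_i] = p_i - \sum_{j<i}\beta_{i,j} p_j$ lies in $\mathrm{span}\{p_0, \ldots, p_i\}$, so the test vector $w = u_i + \hat{B}_k[r_i]$ is admissible. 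Evaluating the functional at this $w$ gives $f - A_k w = r_i - A_k \hat{B}_k[r_i]$, whence $\| r_{i+1} \|_{A_k^{-1}} \le \| r_i - A_k \hat{B}_k[r_i] \|_{A_k^{-1}}$.

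To close, I would convert the right-hand side back through the isometry and apply the accuracy hypothesis \eqref{ine:hatB_invA}. Writing $r_i - A_k \hat{B}_k[r_i] = A_k\bigl(A_k^{-1} r_i - \hat{B}_k[r_i]\bigr)$, the isometry yields $\| r_i - A_k \hat{B}_k[r_i] \|_{A_k^{-1}} = \| A_k^{-1} r_i - \hat{B}_k[r_i] \|_{A_k}$. Now \eqref{ine:hatB_invA}, read with $r_i$ in place of $f$, bounds this by $\delta \, \| r_i \|_{A_k^{-1}}$, which is precisely the desired one-step contraction. Composing the $n$ contractions gives $\| r_n \|_{A_k^{-1}} \le \delta^n \| r_0 \|_{A_k^{-1}}$, i.e.\ \eqref{ine:converge-NPCG}.

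The step I expect to require the most care is the admissibility argument in the second paragraph: the entire improvement over a single application of $\hat{B}_k$ comes from the fact that full CG minimizes over the whole span of search directions, so I must verify that $\hat{B}_k[r_i]$ genuinely belongs to that span (this is where the full, rather than truncated, recurrence is essential) and that the initial step $i = 0$ — where $p_0 = \hat{B}_k[r_0]$ and $\alpha_0$ is exactly the one-dimensional minimizer — is subsumed by the same reasoning. The nonlinearity of $\hat{B}_k[\cdot]$ is harmless precisely because \eqref{ine:hatB_invA} is postulated for \emph{every} right-hand side, allowing it to be re-applied to each residual $r_i$ separately.
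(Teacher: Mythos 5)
Your proof is correct. The paper does not actually prove this statement itself --- it imports it verbatim as Theorem 10.2 of \cite{Vassilevski.P2008} --- but your argument (reduce to a one-step contraction via the minimization property recorded in Remark~\ref{rem:CG_prop}, use the admissible test vector $u_i+\hat{B}_k[r_i]$, and apply the hypothesis \eqref{ine:hatB_invA} to each residual $r_i$) is exactly the standard proof behind that citation, and every step checks out, including the $i=0$ case. One small correction to your closing commentary: the full recurrence is not in fact essential for this bound, since in the truncated version $\hat{B}_k[r_i]$ still lies in the span of the retained directions $\{p_{i-1-m_i},\dots,p_i\}$ over which the local minimization remains exact, so the same one-step contraction survives; this is why the paper's Remark~\ref{rem:converge-NPCG} asserts the estimate for truncated variants as well (the full version is only needed later, for Lemma~\ref{lemma:tildeB}).
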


\begin{remark}\label{rem:converge-NPCG}
As stated in Theorem 10.2 in \cite{Vassilevski.P2008}, the above convergence rate estimate holds for both the full and truncated versions of the nonlinear PCG method.
\end{remark}

\subsection{Nonlinear AMLI-cycle MG}
Now, thanks to Algorithm~\ref{alg:nonlinear-CG}, we can recursively construct the nonlinear AMLI-cycle MG operator as an approximation of~$A_k^{-1}$.  First, we define a nonsymmetric operator, i.e. a nonlinear AMLI-cycle MG without post-smoothing, as shown in Algorithm \ref{alg:ns-AMLI}.
\begin{algorithm}
\caption{Nonsymmetric nonlinear AMLI-cycle MG:~$\hat{B}^{ns}_{k}[\cdot]$} \label{alg:ns-AMLI}
\begin{flushleft}
Assume that $\hat{B}^{ns}_1[f]=A_1^{-1}f$ and that $\hat{B}^{ns}_{k-1}[\cdot]$ has been defined. Then for $f \in V_k$
\end{flushleft}
\begin{enumerate}
\item [] {\bf Pre-smoothing:} $u_1 = R_k f$; 
\item [] {\bf Coarse-grid correction:} $\hat{B}^{ns}_k[f] := u_1 +  \tilde{B}^{ns}_{k-1}[Q_{k-1}(f-A_k u_1)]$, where~$\tilde{B}^{ns}_{k-1}[\cdot]$~is implemented as in Algorithm~\ref{alg:nonlinear-CG} with~$\hat{B}^{ns}_{k-1}$~as the preconditioner. 
\end{enumerate}
\end{algorithm}
Similarly to standard (linear) MG, we can also define a symmetric nonlinear AMLI-cycle multigrid by introducing post-smoothing, as shown in Algorithm \ref{alg:AMLI}.
\begin{algorithm}
\caption{Nonlinear AMLI-cycle MG:~$\hat{B}_{k}[\cdot]$} \label{alg:AMLI}
\begin{flushleft}
Assume that $\hat{B}_1[f]=A_1^{-1}f$ and that $\hat{B}_{k-1}[\cdot]$ has been defined, then for $f \in V_k$,
\end{flushleft}
\begin{enumerate}
\item [] {\bf Pre-smoothing} $u_1 = R_k f$; 
\item [] {\bf Coarse-grid correction} $u_2 = u_1 +  \tilde{B}_{k-1}[Q_{k-1}(f-A_k u_1)]$, where~$\tilde{B}_{k-1}[\cdot]$~is implemented as in Algorithm~\ref{alg:nonlinear-CG} with~$\hat{B}_{k-1}$~as the preconditioner;
\item [] {\bf Post-smoothing} $\hat{B}_k[f] := u_2 + R_k^t(f - A_k u_2)$. 
\end{enumerate}
\end{algorithm}
On level $k$, once $\hat{B}^{ns}_k[\cdot]$ and $\hat{B}_k[\cdot]$ are defined, the corresponding $\tilde{B}^{ns}_k[\cdot]$ and $\tilde{B}_{k}[\cdot]$ are defined by $n$ steps of the nonlinear PCG method using $\hat{B}^{ns}_k[\cdot]$ and $\hat{B}_k[\cdot]$ as the preconditioner, respectively. 

\subsection{Cost of the Nonlinear AMLI-cycle MG}
The cost of the nonlinear AMLI-cycle is discussed in \cite{Axelsson.O;Vassilevski.P.1991a,Vassilevski.P2008,Notay.Y;Vassilevski.P2008}.  We include a complexity estimation here for the sake of completeness.  And we use the notation and terminology from \cite{Notay.Y;Vassilevski.P2008}.  

Let $n_k$ be the number of degrees of freedom at level $k$, and assume a uniform refinement, i.e., $n_k = \mu^d n_{k-1}$, $d=2$ or $d=3$ in which typically $\mu=2$.  Furthermore, assume that the V-cycle MG on level $k$ can be implemented for $\mathcal{O}(n_k)$ flops and that there are $n$ iterations of the nonlinear PCG on the coarse level. Then the cost of the nonlinear AMLI-cycle MG $\hat{B}_k[\cdot]$ can be estimated (using induction) by 
\begin{equation*}
w_k = \mathcal{O}(n_k) + n w_{k-1},
 \end{equation*}
which implies that
\begin{equation*}
w_k = \mathcal{O}(n_k) \sum_j ( \frac{n}{\mu^d})^j.
\end{equation*}
Therefore, the work of each nonlinear AMLI-cycle is comparable to that of the corresponding $n$-fold V-cycle MG.  For example, when $n=2$, the cost of the nonlinear AMLI-cycle MG method is roughly the same as that of W-cycle MG method.  Moreover, for the method to have an optimal complexity method, $n$ must satisfy that
$
n < \mu^d.
$ 
For example, when $d=3$ and $\mu=2$, we need $n < 8$ which is a mild restriction.  It should also be noted that, it is not necessary to apply the nonlinear PCG on each level,  In fact, several levels can be skipped, which leads to the condition
$
n < \mu^{d k_0},
$ 
in which $k_0$ denotes the number of skipped levels.  This is a very mild restriction if we choose $k_0$ sufficiently large. 

Next, we will consider~$\tilde{B}_k[\cdot]$. Iit is implemented by $n$ steps of the nonlinear PCG with $\hat{B}_k[\cdot]$ as the preconditioner, therefore, the cost is similar to $n$ steps of the nonlinear PCG with the $n$-fold V-cycle MG as the preconditioner.

\subsection{Assumptions}
Our goal is to analyze the  convergence of the  nonlinear AMLI-cycle MG using the same assumptions as in the 
conventional (classical) convergence analysis of MG. 

We make the following (classical) assumptions in order to carry out the convergence analysis. These assumptions will be used to show the uniform convergence of the 
nonlinear AMLI-cycle MG method. The first assumption is refereed as to the approximation property of the projection $P_k$. 
\begin{assumption}[Approximation Property]\label{asmp:approx}
\begin{equation}\label{ine:approx}
\|(I-P_{k-1})v\|_{A_{k}}^2 \leq \frac{c_1}{\rho(A_k)}\|A_{k}v\|^2, \quad \forall v \in V_k
\end{equation}
where $\rho(A_k)$ is the spectral radius of $A_k$, and $c_1$ is a constant independent of $k$. 
\end{assumption}
This assumption is commonly used in the MG literature, e.g., Assumption A.7 in \cite{Bramble.J1993},  the ``strong approximation property" assumption in \cite{Vassilevski.P2008}, and Assumption A7.1 in \cite{Xu.J1989}.
Our Assumption \ref{asmp:approx} holds (see, e.g., \cite{Xu.J1989, Vassilevski.P2008}) in the case of 
second-order elliptic problems with full regularity. 

Another common assumption refers to the smoothers. In this paper, we always assume that the (generally nonsymmetric)
smoother~$R_k$ is convergent in the~$\| \cdot \|_{A_k}$~norm.

One main assumption is that the symmetric composite smoother $\tilde{R}_k$, defined by 
 $
 I-\tilde{R}_kA_k = (I-R_kA_k)(I-R_k^tA_k),
 $
 satisfies the following smoothing property.
\begin{assumption}[Smoothing Property]\label{asmp:smooth}
\begin{equation}\label{ine:smooth}
\frac{c_2}{\rho (A_k)}(v,v) \leq (\tilde{R}_kv, v), \forall v \in V_k
\end{equation}
where $c_2$ is a constant independent of $k$.
\end{assumption}
This assumption means that the choice of smoother must be comparable to a simple Richardson smoother. 
It is used to prove estimates concerning the V-cycle MG method, see Assumption A.4. in \cite{Bramble.J1993}. 
Note that we also have another symmetric composite smoother~$\bar{R}_k$~which is defined by
 $
 I-\bar{R}_kA_k = (I-R_k^tA_k)(I-R_kA_k).
 $

Based on the Assumptions~\ref{asmp:approx}~and~\ref{asmp:smooth}, we obtain the following well-known result (see p. 75 of \cite{Xu.J1989} and p. 145 of \cite{Vassilevski.P2008}).
\begin{lemma} \label{lemma:approx}
Assume that Assumptions \ref{asmp:approx} and~\ref{asmp:smooth}~hold.  Then we have
$
\|(I-P_{k-1})\hat{v}\|^2_{A_k} \leq \eta ( \|v\|_{A_k}^2 - \|  \hat{v} \|^2_{A_k} )
$
where~$\hat{v} = (I-R_kA_k)v$, $v \in V$, and~$\eta = \frac{c_1}{c_2} > 0$ is a constant independent of $k$.
\end{lemma}

\begin{remark}
The above lemma can be found as Assumption (A) in \cite{Vassilevski.P2008} and Lemma 6.2 in \cite{Xu.J1989}.  It provides perhaps the shortest convergence proof for the V-cycle MG method. It is also equivalent to the assumption originally used in  \cite{McCormick.S1984,McCormick.S1985} (see \cite{Vassilevski.P2008} for details). Inequality~\eqref{ine:approx} can also be found as inequality (4.82) in \cite{Shauidurov.V1995}.
\end{remark}

\setcounter{remark-count}{1}
\section{Convergence Analysis}
In this section, we present the main results of this paper.  First, we compare the nonlinear AMLI-cycle MG method with the V-cycle MG method and thereby show that the nonlinear AMLI-cycle MG is always better or not worse than the respective V-cycle MG.  Furthermore, based on Assumptions~\ref{asmp:approx} and~\ref{asmp:smooth}, we show that the nonlinear AMLI-cycle MG method is uniformly convergent without the requirement that $n$, the number of iterations of the nonlinear PCG method, be sufficiently large.  

The following two representations are useful in our analysis. First, we have a result  for the nonsymmetric nonlinear operator~$\hat{B}_k^{ns}[\cdot]$ defined in Algorithm~\ref{alg:ns-AMLI}.
\begin{lemma} \label{lemma:B^ns}
For all $v \in V_k$,
\begin{equation}  \label{eqn:error_B^ns}
v - \hat{B}_k^{ns}[A_k v ] = (I - R_kA_k) v - \tilde{B}^{ns}_{k-1}[A_{k-1}P_{k-1}(I-R_kA_k)v]
\end{equation}
and
\begin{equation} \label{eqn:B^ns}
\hat{B}_k^{ns}[v] = R_kv + \tilde{B}_{k-1}^{ns}[Q_{k-1}(I-A_kR_k)v]. 
\end{equation}
\end{lemma}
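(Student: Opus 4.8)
The plan is to read off both identities directly from the defining recursion in Algorithm~\ref{alg:ns-AMLI}, so that the only substantive ingredient is a single intertwining relation between the projections $Q_{k-1}$, $P_{k-1}$ and the coarse operator $A_{k-1}$. Once that relation is in hand, both \eqref{eqn:B^ns} and \eqref{eqn:error_B^ns} follow by elementary substitution, and there is no need to unfold the inner nonlinear solver $\tilde{B}^{ns}_{k-1}[\cdot]$ at all: it appears on both sides with the same argument.

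For the second representation \eqref{eqn:B^ns} I would simply insert the pre-smoothing step $u_1 = R_k f$ into the coarse-grid-correction formula, giving
\[
\hat{B}_k^{ns}[f] = R_k f + \tilde{B}_{k-1}^{ns}\bigl[Q_{k-1}(f - A_k R_k f)\bigr] = R_k f + \tilde{B}_{k-1}^{ns}\bigl[Q_{k-1}(I - A_k R_k)f\bigr],
\]
which is exactly \eqref{eqn:B^ns}; no further work is needed here. For the error representation \eqref{eqn:error_B^ns} I would set $f = A_k v$, so that $u_1 = R_k A_k v$ and the residual fed to the coarse level is $f - A_k u_1 = A_k(I - R_k A_k)v$. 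Subtracting $\hat{B}_k^{ns}[A_k v]$ from $v$ then yields
\[
v - \hat{B}_k^{ns}[A_k v] = (I - R_k A_k)v - \tilde{B}_{k-1}^{ns}\bigl[Q_{k-1} A_k (I - R_k A_k) v\bigr].
\]
To reach \eqref{eqn:error_B^ns} it remains only to rewrite the argument $Q_{k-1} A_k (I - R_k A_k) v$ as $A_{k-1} P_{k-1} (I - R_k A_k) v$; since $(I - R_k A_k)v \in V_k$, this reduces to the operator identity $Q_{k-1} A_k = A_{k-1} P_{k-1}$ on $V_k$.

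The crux of the proof, and essentially the only place where the specific definitions of $Q_{k-1}$, $P_{k-1}$, $A_k$ and $A_{k-1}$ are used, is this intertwining identity, which I would establish by testing against an arbitrary $z \in V_{k-1}$. For $w \in V_k$ one has, using that $Q_{k-1}$ is the $(\cdot,\cdot)$-orthogonal projection onto $V_{k-1}$ together with the defining relation $(A_k u, z) = (u, z)_A$,
\[
(Q_{k-1} A_k w, z) = (A_k w, z) = (w, z)_A,
\]
while using that $P_{k-1}$ is the $(\cdot,\cdot)_A$-orthogonal projection onto $V_{k-1}$ together with the definition of $A_{k-1}$,
\[
(A_{k-1} P_{k-1} w, z) = (P_{k-1} w, z)_A = (w, z)_A.
\]
Since both $Q_{k-1} A_k w$ and $A_{k-1} P_{k-1} w$ lie in $V_{k-1}$ and $z \in V_{k-1}$ is arbitrary, nondegeneracy of $(\cdot,\cdot)$ on $V_{k-1}$ forces their equality, proving the identity. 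I expect this verification to be the main (indeed the only genuine) obstacle; it is nonetheless routine, and the remainder of the argument is pure bookkeeping of the recursion in Algorithm~\ref{alg:ns-AMLI}.
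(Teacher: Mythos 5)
Your proposal is correct and follows exactly the paper's route: both identities are read off from the recursion in Algorithm~\ref{alg:ns-AMLI} together with the intertwining identity $A_{k-1}P_{k-1}=Q_{k-1}A_k$ on $V_k$, which the paper invokes without proof and which you verify correctly by testing against $z\in V_{k-1}$.
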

\begin{proof}
Properties \eqref{eqn:error_B^ns}~and~\eqref{eqn:B^ns}~both follow directly from Algorithm~\ref{alg:ns-AMLI}~and the identity 
$A_{k-1}P_{k-1} = Q_{k-1}A_k$ that holds on $V_k$. 
\end{proof}

Similarly, we have the following lemma concerning the (symmetric) nonlinear operator $\hat{B}_k$ defined in Algorithm~\ref{alg:AMLI}.
\begin{lemma}\label{lemma:B}
For all $v \in V_k$,
\begin{equation} \label{eqn:error_B}
v - \hat{B}_k[A_k v] = (I-R_k^tA_k) ((I-R_kA_k)v - \tilde{B}_{k-1}[A_{k-1}P_{k-1}(I-R_kA_k)v])
\end{equation}
and 
\begin{equation} \label{eqn:B}
\hat{B}_k[v] = \bar{R}_kv + (I-R_k^tA_k)\tilde{B}_{k-1}[Q_{k-1}(I-A_kR_k)v].
\end{equation}
\end{lemma}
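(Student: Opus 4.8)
The plan is to obtain both identities by directly unrolling the three steps of Algorithm~\ref{alg:AMLI}, exactly as in the proof of Lemma~\ref{lemma:B^ns}, the only new ingredient being the bookkeeping of the extra post-smoothing step through the composite smoother $\bar{R}_k$.

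For the error form~\eqref{eqn:error_B}, first I would set $f = A_k v$ and trace the algorithm. After pre-smoothing, $u_1 = R_k A_k v$, so the coarse residual is $A_k v - A_k u_1 = A_k(I - R_k A_k)v$, and projecting with $Q_{k-1}$ together with the identity $A_{k-1}P_{k-1} = Q_{k-1}A_k$ on $V_k$ gives $Q_{k-1}(f - A_k u_1) = A_{k-1}P_{k-1}(I - R_k A_k)v$. Hence $u_2 = R_k A_k v + \tilde{B}_{k-1}[A_{k-1}P_{k-1}(I - R_k A_k)v]$, i.e. $v - u_2 = (I - R_k A_k)v - \tilde{B}_{k-1}[A_{k-1}P_{k-1}(I - R_k A_k)v]$. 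The post-smoothing step reads $\hat{B}_k[A_k v] = u_2 + R_k^t A_k(v - u_2)$, so $v - \hat{B}_k[A_k v] = (I - R_k^t A_k)(v - u_2)$, and substituting the expression for $v - u_2$ yields~\eqref{eqn:error_B}.

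For the operator form~\eqref{eqn:B}, I would instead apply Algorithm~\ref{alg:AMLI} to a general $v \in V_k$: now $u_1 = R_k v$, the coarse input is $Q_{k-1}(I - A_k R_k)v$, and the post-smoothing step gives $\hat{B}_k[v] = R_k^t v + (I - R_k^t A_k)u_2$ with $u_2 = R_k v + \tilde{B}_{k-1}[Q_{k-1}(I - A_k R_k)v]$. Collecting the two smoother contributions leaves $\hat{B}_k[v] = \big(R_k^t + (I - R_k^t A_k)R_k\big)v + (I - R_k^t A_k)\tilde{B}_{k-1}[Q_{k-1}(I - A_k R_k)v]$.

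The only non-routine point---and thus the step I would flag as the main (if minor) obstacle---is to recognize the coefficient of $v$ as $\bar{R}_k$. This follows by expanding the defining relation $I - \bar{R}_k A_k = (I - R_k^t A_k)(I - R_k A_k)$, which gives $\bar{R}_k A_k = (R_k^t + R_k - R_k^t A_k R_k)A_k$; since $A_k$ is invertible we obtain $\bar{R}_k = R_k^t + R_k - R_k^t A_k R_k = R_k^t + (I - R_k^t A_k)R_k$, matching the collected coefficient and completing~\eqref{eqn:B}.
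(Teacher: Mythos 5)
Your proof is correct and follows exactly the route the paper intends: the paper's own proof simply states that both identities follow directly from Algorithm~\ref{alg:AMLI} together with the identity $A_{k-1}P_{k-1}=Q_{k-1}A_k$ on $V_k$, and your unrolling of the three steps (including the correct identification $\bar{R}_k = R_k^t + (I-R_k^tA_k)R_k$ from the defining relation of the composite smoother) is just the fully written-out version of that argument.
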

\begin{proof}
Properties \eqref{eqn:error_B}~and~\eqref{eqn:B} follow directly from the definition in Algorithm~\ref{alg:AMLI} (using again the 
identity $A_{k-1}P_{k-1} =  Q_{k-1}A_k$ that holds on $V_k$). 
\end{proof}

\subsection{Comparison Results Without Assumptions~\ref{asmp:approx}~and~\ref{asmp:smooth}} First, we compare the nonlinear AMLI-cycle MG method with the corresponding nonsymmetric ($\backslash$-cycle) MG and symmetric (V-cycle) MG method without Assumptions~\ref{asmp:approx}~and~\ref{asmp:smooth}.  We show that the nonlinear AMLI-cycle is always better (or not worse) under the assumption that the smoother is convergent in the $\| \cdot \|_{A_k}$-norm. 

The first comparison result is for the nonsymmetric nonlinear AMLI-cycle MG and the $\backslash$-cycle MG. 
The result shows that the nonlinear operator $\hat{B}^{ns}_k$ and $\tilde{B}^{ns}_k$ each give better approximations of the inverse of $A_k$ than does the $\backslash$-cycle MG.
\begin{theorem}\label{thm:ns_compare}
Let both $B^{ns}_k$ and $\hat{B}^{ns}_k[\cdot]$ be defined by Algorithms~\ref{alg:half-cycle}~and~\ref{alg:ns-AMLI}~, and let~$\tilde{B}_k^{ns}[\cdot]$ be implemented as in Algorithm~\ref{alg:nonlinear-CG} with~$\hat{B}_k^{ns}[\cdot]$~as the preconditioner. Then we have 
\begin{equation}
\| v - \tilde{B}^{ns}_k[A_kv] \|_{A_k} \leq \| v - \hat{B}^{ns}_k[A_kv] \|_{A_k} \leq  \| v -  B^{ns}_kA_kv \|_{A_k}\label{ine:ns-compare}
\end{equation}
\end{theorem}
\begin{proof}
We use mathematical induction to prove the theorem.  The result holds for $k=1$ trivially.  Assume that \eqref{ine:ns-compare} holds for $k-1$. By Algorithm~\ref{alg:half-cycle}, we have
$
(I-B^{ns}_kA_k)v = \hat{v} - B^{ns}_{k-1}A_{k-1}P_{k-1}\hat{v} = \hat{v} - P_{k-1}\hat{v}  + P_{k-1}\hat{v} - B^{ns}_{k-1}A_{k-1}P_{k-1}\hat{v}
$
where~$\hat{v} = (I- R_kA_k)v$.  Note that~$P_{k-1}$~is a projection; hence, we have
$
\| v - B^{ns}_kA_kv \|_{A_k}^2  =  \| \hat{v} - P_{k-1}\hat{v} \|_{A_k}^2 + \| P_{k-1}\hat{v} - B^{ns}_{k-1}A_{k-1}P_{k-1}\hat{v}\|_{A_k}^2.
$
Similarly, for the nonlinear operator~$\hat{B}^{ns}_k[\cdot]$, by Lemma~\ref{lemma:B^ns}, we have
\begin{align*}
\| v - \hat{B}^{ns}_k[A_kv] \|_{A_k}^2 & =  \| \hat{v} - P_{k-1}\hat{v} \|_{A_k}^2 + \| P_{k-1}\hat{v} - \tilde{B}^{ns}_{k-1}[A_{k-1}P_{k-1}\hat{v}] \|_{A_k}^2  \\
& \leq  \| \hat{v} - P_{k-1}\hat{v} \|_{A_k}^2 +  \| P_{k-1}\hat{v} - B^{ns}_{k-1}A_{k-1}P_{k-1}\hat{v}\|_{A_k}^2  \\
& = \| v - B^{ns}_kA_kv \|_{A_k}^2.
\end{align*} 
Moreover, therefore, $\tilde{B}_k^{ns}[A_kv]$~is obtained by Algorithm~\ref{alg:nonlinear-CG}~with $\hat{B}_k^{ns}[\cdot]$ as the preconditioner, by~\eqref{ine:improve}, we have
$
\|v - \tilde{B}^{ns}_k[A_k v ] \|_{A_k}^2 \leq \|v - \hat{B}^{ns}_k[A_k v ] \|_{A_k}^2 \leq \| v - B^{ns}_kA_kv \|_{A_k}^2.
$
This completes the proof.
\end{proof}

Note that we only used the minimization property~\eqref{ine:improve} in the proof.  Therefore, Theorem~\ref{thm:ns_compare}~also holds when we use any truncated version of the nonlinear PCG method is used to define the coarse level solver. Thus, we have the following corollary.
\begin{corollary}\label{coro:ns_compare}
Let both~$B^{ns}_k$ and $\hat{B}^{ns}_k[\cdot]$ be defined by Algorithms~\ref{alg:half-cycle}~and~\ref{alg:ns-AMLI}.  Also, let~$\tilde{B}_k^{ns}[\cdot]$~be implemented as in a truncated version of Algorithm~\ref{alg:nonlinear-CG} with $\hat{B}_k^{ns}[\cdot]$~as the preconditioner. Then, we have 
\begin{align*} 
\| v - \tilde{B}^{ns}_k[A_kv] \|_{A_k}  \leq \| v - \hat{B}^{ns}_k[A_kv] \|_{A_k} \leq  \| v -  B^{ns}_kA_kv \|_{A_k}.  
\end{align*}
\end{corollary}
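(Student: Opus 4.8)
The plan is to reproduce the induction in the proof of Theorem~\ref{thm:ns_compare} essentially line for line, the only change being that the coarse solver $\tilde{B}^{ns}_{k-1}[\cdot]$ is now produced by a truncated run of Algorithm~\ref{alg:nonlinear-CG} rather than by the full one. The base case $k=1$ is trivial, since $B^{ns}_1 = A_1^{-1}$ and $\hat{B}^{ns}_1[\cdot] = A_1^{-1}$ coincide. For the inductive step I would assume both inequalities hold at level $k-1$ (with the truncated $\tilde{B}^{ns}_{k-1}[\cdot]$) and establish them at level $k$.

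First I would set $\hat{v} = (I - R_kA_k)v$ and combine the representation of the $\backslash$-cycle iterator from Algorithm~\ref{alg:half-cycle} with identity~\eqref{eqn:error_B^ns} of Lemma~\ref{lemma:B^ns}, so that both error quantities, after the single pre-smoothing step, are expressed through coarse-grid operators acting on $\hat{v}$. Splitting orthogonally with respect to the $(\cdot,\cdot)_{A_k}$-orthogonal projection $P_{k-1}$ gives
\begin{align*}
\| v - B^{ns}_kA_kv \|_{A_k}^2 &=  \| \hat{v} - P_{k-1}\hat{v} \|_{A_k}^2 + \| P_{k-1}\hat{v} - B^{ns}_{k-1}A_{k-1}P_{k-1}\hat{v}\|_{A_k}^2, \\
\| v - \hat{B}^{ns}_k[A_kv] \|_{A_k}^2 &=  \| \hat{v} - P_{k-1}\hat{v} \|_{A_k}^2 + \| P_{k-1}\hat{v} - \tilde{B}^{ns}_{k-1}[A_{k-1}P_{k-1}\hat{v}] \|_{A_k}^2.
\end{align*}
The leading summands agree, so applying the induction hypothesis (the second comparison inequality at level $k-1$) to the vector $P_{k-1}\hat{v} \in V_{k-1}$ bounds the remaining summand and yields $\| v - \hat{B}^{ns}_k[A_kv] \|_{A_k} \le \| v - B^{ns}_kA_kv \|_{A_k}$. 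To pass from $\hat{B}^{ns}_k[\cdot]$ to $\tilde{B}^{ns}_k[\cdot]$ and obtain the second claimed inequality, I would invoke the minimization property~\eqref{ine:improve} and chain the two estimates.

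The only step where the truncated setting could a priori differ from the full one is the validity of~\eqref{ine:improve}, so confirming it is the main (and essentially the sole) obstacle. This is immediate from the derivation in Remark~\ref{rem:CG_prop}: the first search direction $p_0 = \hat{B}^{ns}_k[r_0]$ is always retained, hence the first iterate $u_1$ already minimizes the $A_k^{-1}$-residual norm over $\mathrm{span}\{p_0\}$ and is no worse than $\hat{B}^{ns}_k[f]$; every subsequent truncated step still minimizes over the span of its retained directions and therefore cannot increase that norm, so $u_n = \tilde{B}^{ns}_k[f]$ remains at least as accurate as $\hat{B}^{ns}_k[f]$. Since the proof of Theorem~\ref{thm:ns_compare} uses no property of $\tilde{B}^{ns}_{k-1}[\cdot]$ beyond~\eqref{ine:improve} and the induction hypothesis, the whole argument transfers verbatim and the induction closes.
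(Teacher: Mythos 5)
Your proposal is correct and follows essentially the same route as the paper: the paper justifies this corollary by observing that the proof of Theorem~\ref{thm:ns_compare} uses only the minimization property~\eqref{ine:improve}, which remains valid for any truncated version of Algorithm~\ref{alg:nonlinear-CG}. Your additional verification that~\eqref{ine:improve} survives truncation (the first direction $p_0=\hat B_k^{ns}[r_0]$ is retained and each step is a line minimization, so the $A_k^{-1}$-residual norm is nonincreasing) is exactly the point the paper leaves implicit.
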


Next we show that, similar to the nonsymmetric case, the nonlinear AMLI-cycle MG method is better (not worse) than the respective V-cycle MG method, and the nonlinear operators $\hat{B}_k$ and $\tilde{B}_k$ each provides a better approximations of the inverse of $A_k$. 

We need the following key property of the nonlinear operator~$\tilde{B}_k[\cdot]$~obtained by Algorithm~\ref{alg:nonlinear-CG}. This property plays an important role in our analysis.
\begin{lemma}\label{lemma:tildeB}
Let~$\tilde{B}_k[\cdot]$~be implemented as in Algorithm~\ref{alg:nonlinear-CG}~with~$\hat{B}_k[\cdot]$~as the preconditioner. For~$\forall v \in V_k$, we have
\begin{equation}\label{eqn:tildeB}
\| v -  \tilde{B}_k[A_kv] \|^2_{A_k} = (v - \tilde{B}_k[A_kv], v)_{A_k}.
\end{equation}
\end{lemma}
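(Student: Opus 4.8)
The plan is to reduce the claimed identity to a single $A_k$-orthogonality relation, and then to recognize that relation as the standard Galerkin orthogonality of the conjugate-gradient iterates, which is already encoded in the residual--direction orthogonality of Remark~\ref{rem:CG_prop}. Writing $e := v - \tilde{B}_k[A_kv]$, I would first expand
\[
\| e \|_{A_k}^2 - (e, v)_{A_k} = (e, e - v)_{A_k} = -(e, \tilde{B}_k[A_kv])_{A_k},
\]
so that the desired equation~\eqref{eqn:tildeB} is equivalent to the single statement that the error $e$ is $A_k$-orthogonal to the computed approximation, i.e. $(e, \tilde{B}_k[A_kv])_{A_k} = 0$.

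Next I would pass from the $A_k$-inner product to the base inner product through the residual. Setting $f = A_k v$ in Algorithm~\ref{alg:nonlinear-CG}, we have $\tilde{B}_k[A_kv] = u_n$, and the final residual satisfies $r_n = f - A_k u_n = A_k(v - u_n) = A_k e$. Hence
\[
(e, \tilde{B}_k[A_kv])_{A_k} = (A_k e, u_n) = (r_n, u_n),
\]
which converts the claim into an orthogonality statement in the original inner product between the residual and the current iterate.

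The key step is then to expand $u_n$ in terms of the search directions. Because $u_0 = 0$, the update~\eqref{eqn:ncg_iterate} telescopes to $u_n = \sum_{j=0}^{n-1} \alpha_j p_j$, and therefore
\[
(r_n, u_n) = \sum_{j=0}^{n-1} \alpha_j (r_n, p_j) = 0,
\]
where every term vanishes by~\eqref{eqn:rp_orthogonal} taken with $i = n-1$. Chaining the three displays gives~\eqref{eqn:tildeB}.

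There is no genuine obstacle here; the only points requiring care are bookkeeping ones. I would record explicitly that $u_0 = 0$, so that $u_n$ is a pure linear combination of $p_0,\dots,p_{n-1}$, and I would note that~\eqref{eqn:rp_orthogonal} simultaneously makes $r_n$ orthogonal to \emph{all} previous directions rather than just the most recent one. This latter fact is exactly where the full conjugacy~\eqref{eqn:p_A-orthogonal} is used, so this lemma genuinely relies on the full version of the nonlinear PCG method and, unlike the earlier uniform-convergence theorems, would not survive a truncated variant.
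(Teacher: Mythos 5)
Your proof is correct and follows essentially the same route as the paper: both arguments reduce the identity to $(r_n,u_n)=0$, expand $u_n=\sum_{j=0}^{n-1}\alpha_j p_j$, and invoke the residual--direction orthogonality~\eqref{eqn:rp_orthogonal}. Your closing observation that the argument depends on the full (non-truncated) version of the nonlinear PCG method matches the remark the paper makes after Theorem~\ref{thm:AMLI_vs_V}.
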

\begin{proof}
By~\eqref{eqn:ncg_iterate}, we can see that~$u_i = \sum_{j=0}^{i-1}\alpha_{j}p_j$. Due to the fact that the residual~$r_i$~is orthogonal to all the previous directions~$p_j$, $j=0, 1, \cdots, i-1$, we have~$(r_i,u_i)=0$. By definition, $\tilde{B}_k[f] := u_{n}$.  Therefore, we have
$(r_n,u_n) =0$, $r_n = f - A_k \tilde{B}_k[f]$, i.e., 
\begin{equation} \label{eqn:ru_orthogonal}
(f-A_k\tilde{B}_k[f], \tilde{B}_k[f])=0.
\end{equation}
If we let $f = A_kv$, we have
\begin{align*}
\| v - \tilde{B}_k[A_kv] \|_{A_k}^2 &= (v - \tilde{B}_k[A_kv], v-\tilde{B}_k[A_kv])_{A_k} \\
	& = (v-\tilde{B}_k[A_kv], v)_{A_k} - (v-\tilde{B}_k[A_kv],\tilde{B}_k[A_kv])_{A_k}. 
\end{align*}
The second term vanishes due to~\eqref{eqn:ru_orthogonal}~and the choice~$f = A_kv$. Then~\eqref{eqn:tildeB}~follows directly.
\end{proof}

Now we are in a position to show the following comparison theorem for the nonlinear AMLI-cycle MG method and the respective V-cycle MG method.
\begin{theorem}\label{thm:AMLI_vs_V}
Let ~$\hat{B}_k[\cdot]$~be defined by Algorithm~\ref{alg:AMLI}, and let~$\tilde{B}_k[\cdot]$~be implemented as in Algorithm~\ref{alg:nonlinear-CG} with~$\hat{B}_k[\cdot]$~as the preconditioner. We also assume that the smoother~$R_k$~is convergent. For~$\forall v \in V_k$, the following estimates hold:
\begin{equation} \label{ine:AMLI_vs_V}
0 \leq (v - \tilde{B}_k[A_kv], v)_{A_k} \leq (v - \hat{B}_k[A_kv], v)_{A_k} \leq (v - B_kA_kv, v)_{A_k}.
\end{equation}
\end{theorem}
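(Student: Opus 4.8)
The plan is to recast the three quantities in \eqref{ine:AMLI_vs_V} as the quadratic ``error-energy'' forms
$$a_k(v):=(v-\tilde B_k[A_kv],v)_{A_k},\quad b_k(v):=(v-\hat B_k[A_kv],v)_{A_k},\quad c_k(v):=(v-B_kA_kv,v)_{A_k},$$
and to prove the entire chain $0\le a_k(v)\le b_k(v)\le c_k(v)$ simultaneously by induction on the level $k$. The leftmost inequality is immediate and needs no induction: Lemma~\ref{lemma:tildeB} gives $a_k(v)=\|v-\tilde B_k[A_kv]\|_{A_k}^2\ge 0$. The base case $k=1$ is trivial, since $B_1=A_1^{-1}$, $\hat B_1[\cdot]=A_1^{-1}$, and \eqref{ine:improve} forces $\tilde B_1[\cdot]=A_1^{-1}$ as well, so all three forms vanish.

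The heart of the argument is a pair of \emph{matching} recursive representations for $b_k$ and $c_k$. Writing $\hat v=(I-R_kA_k)v$ and using Lemma~\ref{lemma:B}, I have $v-\hat B_k[A_kv]=(I-R_k^tA_k)e$ with $e=\hat v-\tilde B_{k-1}[A_{k-1}P_{k-1}\hat v]$. First I would collapse the post-smoothing via the algebraic identity that the $(\cdot,\cdot)_{A_k}$-adjoint of $I-R_k^tA_k$ is $I-R_kA_k$, so that $b_k(v)=(e,(I-R_kA_k)v)_{A_k}=(e,\hat v)_{A_k}$. The crucial step is then to invoke the residual--iterate orthogonality \eqref{eqn:ru_orthogonal} at level $k-1$ (the engine behind Lemma~\ref{lemma:tildeB}), which kills the cross term $(e,\tilde B_{k-1}[A_{k-1}P_{k-1}\hat v])_{A_k}$ and yields $b_k(v)=\|e\|_{A_k}^2$. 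Splitting $e$ $A_k$-orthogonally along $V_{k-1}$ and applying Lemma~\ref{lemma:tildeB} once more gives the clean recursion $b_k(v)=\|\hat v-P_{k-1}\hat v\|_{A_k}^2+a_{k-1}(P_{k-1}\hat v)$. Running the identical computation for the linear V-cycle, with $\tilde B_{k-1}[A_{k-1}P_{k-1}\hat v]$ replaced by $B_{k-1}A_{k-1}P_{k-1}\hat v$, produces $c_k(v)=\|\hat v-P_{k-1}\hat v\|_{A_k}^2+c_{k-1}(P_{k-1}\hat v)$ with the \emph{same} leading term.

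With these representations in hand the two remaining inequalities fall out. For $a_k\le b_k$ I combine \eqref{ine:improve}, which gives $a_k(v)=\|v-\tilde B_k[A_kv]\|_{A_k}^2\le\|v-\hat B_k[A_kv]\|_{A_k}^2$, with the $A_k$-nonexpansiveness of $I-R_k^tA_k$ (guaranteed by the hypothesis that $R_k$ is $A_k$-convergent), since $\|v-\hat B_k[A_kv]\|_{A_k}^2=\|(I-R_k^tA_k)e\|_{A_k}^2\le\|e\|_{A_k}^2=b_k(v)$. For $b_k\le c_k$ I subtract the two recursions: the leading terms cancel and leave $c_k(v)-b_k(v)=c_{k-1}(P_{k-1}\hat v)-a_{k-1}(P_{k-1}\hat v)$, which is nonnegative by the induction hypothesis $a_{k-1}\le c_{k-1}$ applied to $P_{k-1}\hat v\in V_{k-1}$, closing the induction.

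The main obstacle, and the only genuinely delicate point, is establishing $b_k(v)=\|e\|_{A_k}^2$, i.e.\ that inside the energy form the coarse-level nonlinear correction behaves exactly like an $A_k$-orthogonal projection error. This rests entirely on \eqref{eqn:ru_orthogonal}, the orthogonality of the final PCG residual to the final iterate, which is itself a consequence of the full (untruncated) conjugate-direction construction and explains why the comparison would fail for a truncated variant. I expect the bookkeeping of passing between $(\cdot,\cdot)_{A_k}$ and $(\cdot,\cdot)_{A_{k-1}}$ through $P_{k-1}$ (legitimate because the inner products agree on the nested spaces and $A_{k-1}P_{k-1}=Q_{k-1}A_k$ on $V_k$) to require care but to be routine; notably, the $A_k$-convergence of $R_k$ is spent only on the $a_k\le b_k$ step and is not needed for $b_k\le c_k$.
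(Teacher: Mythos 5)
Your proposal is correct and follows essentially the same route as the paper's proof: the nonnegativity comes from Lemma~\ref{lemma:tildeB}, the middle inequality from \eqref{ine:improve} together with the $A_k$-nonexpansiveness of $I-R_k^tA_k$ and the identity $\|v-\hat{B}_k[A_kv]\|_{A_k}^2=(v-\hat{B}_k[A_kv],v)_{A_k}$ (which the paper obtains via the orthogonal splitting and Lemma~\ref{lemma:tildeB}, and you obtain equivalently by killing the cross term with \eqref{eqn:ru_orthogonal}), and the last inequality from the matching level-$(k-1)$ recursions plus the induction hypothesis. Your explicit recursions $b_k(v)=\|\hat v-P_{k-1}\hat v\|_{A_k}^2+a_{k-1}(P_{k-1}\hat v)$ and $c_k(v)=\|\hat v-P_{k-1}\hat v\|_{A_k}^2+c_{k-1}(P_{k-1}\hat v)$ are just a cleaner packaging of the paper's inline chain of identities, and your observations about where the smoother convergence and the full (untruncated) PCG are used match the paper's remarks.
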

\begin{proof}
Inequalities \eqref{ine:AMLI_vs_V}~hold trivially for~$k=1$. Assuming by induction that~\eqref{ine:AMLI_vs_V} holds for~$k-1$, by Lemma~\ref{lemma:tildeB}, we then have that
\begin{equation*}
(v - \tilde{B}_k[A_kv], v)_{A_k}  = \| v -  \tilde{B}_k[A_kv] \|^2_{A_k}  \geq 0,
\end{equation*}
which confirms the first inequality in~\eqref{ine:AMLI_vs_V}.  As~$\tilde{B}_k[A_kv]$~is obtained by Algorithm~\ref{alg:nonlinear-CG} with $\hat{B}_k[\cdot]$ as the preconditioner, by~\eqref{ine:improve}, we have that
\begin{equation*}
(v - \tilde{B}_k[A_kv], v)_{A_k}  = \|v - \tilde{B}_k[A_k v ] \|_{A_k}^2 \leq \|v - \hat{B}_k[A_k v ] \|_{A_k}^2. 
\end{equation*}
On the other hand, if we let~$\hat{v} = (I-R_kA_k)v$, then according to Lemma~\ref{lemma:B}, we have
\begin{align*}
\|v - \hat{B}_k[A_k v ] \|_{A_k}^2 & = \| (I-R_k^tA_k)(\hat{v} - \tilde{B}_{k-1}[A_{k-1}P_{k-1}\hat{v}])\|^2_{A_k} \\
(\text{smoother is convergent}) & \leq  \| \hat{v} - \tilde{B}_{k-1}[A_{k-1}P_{k-1}\hat{v}]\|^2_{A_k} \\
(\text{orthogonality})& = \| \hat{v} - P_{k-1}\hat{v} \|^2_{A_k} + \| P_{k-1} \hat{v} -  \tilde{B}_{k-1}[A_{k-1}P_{k-1}\hat{v}]\|^2_{A_k}  \\
(\text{Lemma~\ref{lemma:tildeB}})& = (\hat{v}-P_{k-1}\hat{v}, \hat{v}-P_{k-1}\hat{v})_{A_k} \\
& \quad + (P_{k-1} \hat{v} -  \tilde{B}_{k-1}[A_{k-1}P_{k-1}\hat{v}], P_{k-1}\hat{v})_{A_k} \\
(\text{orthogonality}) & = (\hat{v}-P_{k-1}\hat{v}, \hat{v})_{A_k} + (P_{k-1} \hat{v} -  \tilde{B}_{k-1}[A_{k-1}P_{k-1}\hat{v}], \hat{v})_{A_k} \\
(\text{Lemma~\ref{lemma:B}})& = (v - \hat{B}_k[A_kv],v)_{A_k}.
\end{align*}
Therefore, we have
$
(v - \tilde{B}_k[A_kv], v)_{A_k}\leq (v - \hat{B}_k[A_kv],v)_{A_k},
$
which confirms the second inequality in~\eqref{ine:AMLI_vs_V}. For the last inequality, we have
\begin{align*}
(v - \hat{B}_k[A_kv],v)_{A_k} & = (\hat{v}-P_{k-1}\hat{v}, \hat{v})_{A_k} + (P_{k-1} \hat{v} -  \tilde{B}_{k-1}[A_{k-1}P_{k-1}\hat{v}], \hat{v})_{A_k} \\
(\text{induction assumption}) & \leq (\hat{v}-P_{k-1}\hat{v}, \hat{v})_{A_k} + (P_{k-1} \hat{v} -  B_{k-1}A_{k-1}P_{k-1}\hat{v}, P_{k-1}\hat{v})_{A_k} \\ 
& = (v - B_kA_kv,v)_{A_k}.
\end{align*}
This confirms the last inequality in~\eqref{ine:AMLI_vs_V}.  This completes the proof. 
\end{proof}

\begin{remark}
We recall that Lemma~\ref{lemma:tildeB}~is based on the fact that the current residual $r_i$ is orthogonal to all previous search directions--a fact that only holds for the full version of the nonlinear AMLI-cycle MG. 
Therefore, the full version of the nonlinear PCG should be preferred in practice over both the steepest descent method and the truncated version of the nonlinear PCG.  By choosing the full version of the nonlinear PCG, we will guarantee the monotonicity stated in  Theorem~\ref{thm:AMLI_vs_V}, which also holds only for this method.
\end{remark}

Regarding the comparison between the nonlinear AMLI-cycle MG method and the $n$-fold V-cycle MG method, the latter of which is defined by recursively applying the coarse-grid correction $n$ times (e.g., $n=2$ corresponds to the well-known W-cycle MG), results that are similar to~\eqref{ine:ns-compare} and~\eqref{ine:AMLI_vs_V}~are too strong and in general do not hold.  Because in general if 
$\|(I - B^1_k A_k)v \|_{A_k} \leq \|(I-B^2_kA_k)v \|_{A_k}$ holds for any $v \in V_k$ where both $B^1_k$ and $B^2_k$ are linear operators, this does not imply that $\| (I-B^1_k A_k )^n v  \|_{A_k} \leq \| (I - B^2_kA_k)^n v \|_{A_k}$ for all $v \in V_k$.  However, we can still show that the nonlinear AMLI-cycle MG method performs better (or not worse) 
(in terms of the convergence bounds)
than the corresponding $n$-fold V-cycle MG method for $n \geq 2$ in the following sense. 

\begin{theorem}\label{thm:ns_compare_n}
Let $B^{ns,[n]}_k$ be the $n$-fold V-cycle MG without post-smoothing and $\hat{B}^{ns}_k[\cdot]$ be defined by Algorithm~\ref{alg:ns-AMLI}, and let $\tilde{B}_k^{ns}[\cdot]$ be implemented as in Algorithm~\ref{alg:nonlinear-CG} with $n$ steps with~$\hat{B}_k^{ns}[\cdot]$~as the preconditioner.  Then we have
\begin{align*} 
\| v - B^{ns,[n]}_kA_kv \|^2_{A_k} &\leq  \rho_k \| v  \|^2_{A_k}, \\
\| v - \hat{B}^{ns}_k[A_kv] \|^2_{A_k} &\leq \delta_k  \| v \|^2_{A_k},   
\end{align*}
where $0 \leq \delta_k \leq \rho_k <1$. Equivalently, $\| v - \hat{B}^{ns}_k[A_kv] \|_{A_k} \leq \| I- B^{ns,[n]}_kA_k \|_{A_k} \| v \|_{A_k}$.
\end{theorem}
\begin{proof}
Because a direct solver is used on the coarsest level $k=1$, we have $0 = \delta_1 = \rho_1 < 1$.  Therefore, it is easy to see that $B_2^{ns, [n]} = \hat{B}_2^{ns}[\cdot]$, which means that $0 \leq \delta_2 = \rho_2 <1$.  This implies that $\tilde{B}_2^{ns}[\cdot]$ is defined as $n$ steps of the preconditioned Krylov method with $B_2^{ns,[n]}$ as the preconditioner.  By the minimization property of Algorithm~\ref{alg:nonlinear-CG} and the fact that the preconditioner is now linear, we have $ \| v - \tilde{B}_2^{ns}[A_2v] \|_{A_2} \leq \| (I-B_2^{ns,[n]}A_2)^n v|_{A_2}$.  Therefore, on level $3$, we have
\begin{align*}
\|v - \hat{B}^{ns}_3[A_3 v ] \|_{A_3}^2 &=  \| \hat{v} - P_{2}\hat{v} \|_{A_3}^2  + \| P_{2}\hat{v} - \tilde{B}^{ns}_{2}[A_{2}P_{2}\hat{v}] \|_{A_3}^2 \\
	& \leq \| \hat{v} - P_{2}\hat{v} \|_{A_3}^2  + \| (I - B^{ns,[n]}_{2}A_{2})^nP_{2}\hat{v} \|_{A_3}^2 \\
	& =  \| (I - B_3^{ns,[n]}A_3) v\|_{A_3}^2,
\end{align*}
where $\hat{v} = (I-R_kA_k)v$, $k=3$. On the other hand,
\begin{align*}
 \| (I - B_3^{ns,[n]}A_3) v\|_{A_3}^2 &= \| \hat{v} - P_{2}\hat{v} \|_{A_3}^2  + \| (I - B^{ns,[n]}_{2}A_{2})^nP_{2}\hat{v} \|_{A_3}^2 \\ 
 & \leq \| \hat{v} - P_{2}\hat{v} \|_{A_3}^2 + \rho^n_{2} \| P_{2} \hat{v} \|^2_{A_3}  \\
 			& \leq (1 - \rho_{2}^n) \| \hat{v} - P_{2}\hat{v} \|_{A_3}^2 + \rho_{2}^n \| \hat{v} \|^2_{A_3} \\
						& \leq ( (1-\rho_{2}^n) \delta_3^{TG} + \rho_{2}^n )  \| v \|^2_{A_k} \\
						& := \rho_3   \| v \|^2_{A_k},
\end{align*}
where $\rho_3 = (1-\rho_{2}^n) \delta_3^{TG} + \rho_{2}^n $ and where $\delta_3^{TG}$ is the two-grid convergence rate at level $3$. In the following, $\delta_k^{TG}$ denotes the two-grid convergence rate at level $k$.  Hence,  we have $ \|v - \hat{B}^{ns}_3[A_3 v ] \|_{A_3}^2 \leq \delta_3 \| v \|_{A_3}^2 $ and $0\leq \delta_3 \leq \rho_3 <1$.

For $k \geq 4$, we use mathematical induction.  Assume that $0 \leq \delta_{k-1} \leq \rho_{k-1} \leq 1$ holds; therefore, we have
$
(I-B^{ns,[n]}_kA_k)v  = \hat{v} - P_{k-1}\hat{v}  + (I- B^{ns,[n]}_{k-1}A_{k-1})^nP_{k-1}\hat{v}.
$
Note that~$P_{k-1}$~is a projection.  Therefore, we have
\begin{align*}
\| v - B^{ns,[n]}_kA_kv \|_{A_k}^2  &=  \| \hat{v} - P_{k-1}\hat{v} \|_{A_k}^2 + \| (I - B^{ns,[n]}_{k-1}A_{k-1})^nP_{k-1}\hat{v}\|_{A_k}^2 \\
\text{(Induction Assumption)}	& \leq \| \hat{v} - P_{k-1}\hat{v} \|_{A_k}^2 + \rho^n_{k-1} \| P_{k-1} \hat{v} \|^2_{A_k}  \\
\text{(Orthogonality)}			& \leq (1 - \rho_{k-1}^n) \| \hat{v} - P_{k-1}\hat{v} \|_{A_k}^2 + \rho_{k-1}^n \| \hat{v} \|^2_{A_k} \\
						& \leq ( (1-\rho_{k-1}^n) \delta_{k}^{TG} + \rho_{k-1}^n )  \| v \|^2_{A_k} \\
						& := \rho_k   \| v \|^2_{A_k},
\end{align*}
where $\rho_k = (1-\rho_{k-1}^n) \delta_k^{TG} + \rho_{k-1}^n $.  Similarly, for $\hat{B}^{ns}_k[\cdot]$, we have
\begin{align*}
\|v - \hat{B}^{ns}_k[A_k v ] \|_{A_k}^2 &=  \| \hat{v} - P_{k-1}\hat{v} \|_{A_k}^2  + \| P_{k-1}\hat{v} - \tilde{B}^{ns}_{k-1}[A_{k-1}P_{k-1}\hat{v}] \|_{A_k}^2 \\
\text{(Induction Assumption)}	&  \leq  \| \hat{v} - P_{k-1}\hat{v} \|_{A_k}^2  + \delta^n_{k-1} \| P_{k-1}\hat{v} \|_{A_k}^2  \\
						& \leq ((1-\delta^n_{k-1})  \delta_k^{TG} + \delta^n_{k-1}) \| v \|_{A_k}^2   \\
						& := \delta_k \| v \|_{A_k}^2,
\end{align*}
where $\delta_k = (1-\delta_{k-1}^n)  \delta_{k}^{TG} + \delta_{k-1}^n $.  Because $0 \leq  \delta_{k-1} \leq \rho_{k-1}<1$, it follows
that 
$0 \leq  \delta_{k} \leq \rho_{k}<1$.  This completes the proof.
\end{proof}

For the $n$-fold V-cycle MG method with post-smoothing, we have the following results.
\begin{theorem}\label{thm:compare_n}
Let $B^{[n]}_k$ be the $n$-fold V-cycle MG method with post-smoothing and $\hat{B}_k[\cdot]$ defined by Algorithm~\ref{alg:AMLI}, and let $\tilde{B}_k[\cdot]$ be implemented as in Algorithm~\ref{alg:nonlinear-CG} with $n$ steps with~$\hat{B}_k[\cdot]$~as the preconditioner.  Then we have
\begin{align*} 
\| v - B^{[n]}_kA_kv \|^2_{A_k} &\leq  \rho_k \| v  \|^2_{A_k}, \\
\| v - \hat{B}_k[A_kv] \|^2_{A_k} &\leq \delta_k  \| v \|^2_{A_k},   
\end{align*}
where $0 \leq \delta_k \leq \rho_k <1$. Equivalently, $\| v - \hat{B}_k[A_kv] \|_{A_k} \leq \| I- B^{[n]}_kA_k \|_{A_k} \| v \|_{A_k}$.
\end{theorem}
\begin{proof}
Due to the fact that we are using a direct solver on the coarsest grid, we have $B_2^{[2]} = \hat{B}_2[\cdot]$ again.  By the same argument as the proof of the previous theorem, we have $0 = \delta_1 = \rho_1$, $0 \leq \delta_2 = \rho_2 < 1$, and $0 \leq \delta_3 \leq \rho_3 <1$. 
For $k \geq 4$, assume that $0 \leq  \delta_{k-1} \leq \rho_{k-1}<1$.  Denote~$(I-R_kA_k)v$~by~$\hat{v}$ as before and~$\hat{w} = (I-R_kA_k)w$.  Therefore, we have
\begin{align*}
( v - B^{[n]}_kA_kv, w)_{A_k} &= (\hat{v} - P_{k-1} \hat{v} + (I - B^{[n]}_{k-1}A_{k-1})^nP_{k-1}\hat{v}, \hat{w})_{A_k} \\
\text{(Cauchy Schwarz)}& \leq \| \hat{v} - P_{k-1}\hat{v} \|_{A_k} \| \hat{w} - P_{k-1}\hat{w}\|_{A_k} \\ 
& \quad  + \| (I - B^{[n]}_{k-1}A_{k-1})^nP_{k-1}\hat{v} \|_{A_k} \| P_{k-1}\hat{w}\|_{A_k} \\
\text{(Induction Assumption)}& \leq \| \hat{v} - P_{k-1}\hat{v} \|_{A_k} \| \hat{w} - P_{k-1}\hat{w}\|_{A_k} \\
& \quad + \rho_{k-1}^{n/2} \| P_{k-1}\hat{v} \|_{A_k} \| P_{k-1}\hat{w}\|_{A_k} \\
\text{(Cauchy Schwarz)} & \leq \sqrt{\| \hat{v} - P_{k-1} \hat{v} \|_{A_k}^2 + \rho_{k-1}^n \| P_{k-1}\hat{v}  \|_{A_k}^2}  \\
& \quad  \times \sqrt{\| \hat{w}-P_{k-1} \hat{w} \|_{A_k}^2 + \| P_{k-1} \hat{w} \|_{A_k}^2 }.
\end{align*}
Then, similar to Theorem~\ref{thm:ns_compare_n}, we have
$
(v - B_k^{[n]}A_kv,w)_{A_k} \leq \rho_k^{1/2}\| v \|_{A_k} \| w \|_{A_k},
$
which implies that $ \| v - B^{[n]}_k A_k v \|^2_{A_k} \leq \rho_k \| v \|^2_{A_k} $ where~$\rho_k = (1-\rho_{k-1}^n) \delta_k^{TG} + \rho_{k-1}^n $.  On the other hand, we have
\begin{align*}
( v - \hat{B}_k[A_kv], w)_{A_k} 
& = (\hat{v} - P_{k-1}\hat{v}, \hat{w}-P_{k-1}\hat{w})_{A_k} \\
& \quad + (P_{k-1}\hat{v} - \tilde{B}_{k-1}[A_{k-1}P_{k-1}\hat{v}], P_{k-1}\hat{w})_{A_k} \\
\text{(Cauchy Schwarz)}& \leq \| \hat{v} - P_{k-1}\hat{v} \|_{A_k} \| \hat{w} - P_{k-1}\hat{w}\|_{A_k} \\ 
& \quad  + \| P_{k-1} \hat{v} - \tilde{B}_{k-1}[A_{k-1}P_{k-1}\hat{v}] \|_{A_k} \| P_{k-1}\hat{w}\|_{A_k} \\
\text{(Induction Assumption)}& \leq \| \hat{v} - P_{k-1}\hat{v} \|_{A_k} \| \hat{w} - P_{k-1}\hat{w}\|_{A_k} \\
& \quad + \delta_{k-1}^{n/2} \| P_{k-1}\hat{v} \|_{A_k} \| P_{k-1}\hat{w}\|_{A_k} \\
\text{(Cauchy Schwarz)} & \leq \sqrt{\| \hat{v} - P_{k-1} \hat{v} \|_{A_k}^2 + \delta_{k-1}^n \| P_{k-1}\hat{v}  \|_{A_k}^2}  \\
& \quad  \times \sqrt{\| \hat{w}-P_{k-1} \hat{w} \|_{A_k}^2 + \| P_{k-1} \hat{w} \|_{A_k}^2 }.
\end{align*} 
Then, we have
$
(v - \hat{B}_kA_kv,w)_{A_k} \leq \delta_k^{1/2}\| v \|_{A_k} \| w \|_{A_k},
$
which implies that 
$$ \| v - \hat{B}_k A_k v \|^2_{A_k} \leq \delta_k \| v \|^2_{A_k} $$ 
where~$\delta_k = (1-\delta_{k-1}^n) \delta^{TG}_k + \delta_{k-1}^n $.  Because $0 \leq  \delta_{k-1} \leq \rho_{k-1}<1$, it follows that $0 \leq  \delta_{k} \leq \rho_{k}<1$.  This completes the proof.
\end{proof}

Again, it is easy to see that the above results also hold for the truncated version of the nonlinear PCG method.  In addition, we have the following corollaries. 
\begin{corollary}
Let $B^{ns,[n]}_k$ be the $n$-fold V-cycle MG without post-smoothing. Define $\hat{B}^{ns}_k[\cdot]$ by Algorithm~\ref{alg:ns-AMLI}, and let $\tilde{B}_k^{ns}[\cdot]$ be implemented as in a truncated version of  Algorithm~\ref{alg:nonlinear-CG} with $n$ steps with~$\hat{B}_k^{ns}[\cdot]$~as the preconditioner.  Then we have
\begin{equation*} 
\| v - B^{ns,[n]}_kA_kv \|^2_{A_k} \leq  \rho_k \| v  \|^2_{A_k}, \qquad
\| v - \hat{B}^{ns}_k[A_kv] \|^2_{A_k} \leq \delta_k  \| v \|^2_{A_k},   
\end{equation*}
where $0 \leq \delta_k \leq \rho_k <1$. Equivalently, $\| v - \hat{B}^{ns}_k[A_kv] \|_{A_k} \leq \| I- B^{ns,[n]}_kA_k \|_{A_k} \| v \|_{A_k}$.
\end{corollary}

\begin{corollary}
Let $B^{[n]}_k$ be the $n$-fold V-cycle MG with post-smoothing. Define $\hat{B}_k[\cdot]$ by Algorithm~\ref{alg:AMLI}, and let $\tilde{B}_k[\cdot]$ be implemented as in a truncated version of Algorithm~\ref{alg:nonlinear-CG} with $n$ steps with~$\hat{B}_k[\cdot]$~as the preconditioner.  Then we have
\begin{equation*} 
\| v - B^{[n]}_kA_kv \|^2_{A_k} \leq  \rho_k \| v  \|^2_{A_k}, \qquad
\| v - \hat{B}_k[A_kv] \|^2_{A_k} \leq \delta_k  \| v \|^2_{A_k},   
\end{equation*}
where $0 \leq \delta_k \leq \rho_k <1$.  Equivalently, $\| v - \hat{B}_k[A_kv] \|_{A_k} \leq \| I- B^{[n]}_kA_k \|_{A_k} \| v \|_{A_k}$.
\end{corollary}

\subsection{Comparison Results under Assumptions~\ref{asmp:approx}~and~\ref{asmp:smooth}}
We return now to Assumptions~\ref{asmp:approx}~and~\ref{asmp:smooth}.
Under these assumptions, we have the following comparison theorem, which shows that the nonlinear AMLI-cycle MG method is better than the~$\backslash$-cycle MG method
by a factor of $\rho <1$, as specified in the following theorem.
\begin{theorem}\label{thm:AMLI_vs_ns}
Let~$\hat{B}_k[\cdot]$~be defined by Algorithm~\ref{alg:AMLI}, and let~$\tilde{B}_k[\cdot]$~be implemented as in Algorithm~\ref{alg:nonlinear-CG} with~$\hat{B}_k[\cdot]$~as the preconditioner. 
Assume that Assumptions~\ref{asmp:approx}~and~\ref{asmp:smooth} hold. We then have the estimates
\begin{equation}
\| v - \tilde{B}_k[A_kv] \|_{A_k} \leq \| v - \hat{B}_k[A_kv] \|_{A_k} \leq \rho \| v - B_{k}^{ns}A_k v \|_{A_k}  \label{ine:vs_ns} \\
\end{equation}
where~$\rho = \sqrt{\frac{c_1}{c_1+c_2}} < 1$, which is a constant independent of~$k$.
\end{theorem}
\begin{proof}
The results holds for $k=1$ trivially.  Assume that~\eqref{ine:vs_ns}~holds for level~$k-1$. Denote~$(I-R_kA_k)v$~by~$\hat{v}$, and let~$\hat{w} = (I-R_kA_k)w$. Similar to Theorem~\ref{thm:compare_n}, we have 
\begin{align*}
( v - \hat{B}_k[A_kv], w)_{A_k} 
& \leq \| \hat{v} - P_{k-1}\hat{v} \|_{A_k} \| \hat{w} - P_{k-1}\hat{w}\|_{A_k} \\
  	& \quad + \| P_{k-1} \hat{v} - \tilde{B}_{k-1}[A_{k-1}P_{k-1}\hat{v}] \|_{A_k} \| P_{k-1}\hat{w}\|_{A_k} \\
\text{(induction assumption)}& \leq \| \hat{v} - P_{k-1}\hat{v} \|_{A_k} \| \hat{w} - P_{k-1}\hat{w}\|_{A_k} \\
	& \quad+ \rho \| P_{k-1} \hat{v} - \tilde{B}^{ns}_{k-1}A_{k-1}P_{k-1}\hat{v} \|_{A_k} \| P_{k-1}\hat{w}\|_{A_k} \\
\text{(Cauchy-Schwarz)} & \leq \sqrt{\| \hat{v} - P_{k-1} \hat{v} \|_{A_k}^2 + \| P_{k-1}\hat{v} - B_{k-1}^{ns}A_{k-1}P_{k-1}\hat{v}\|_{A_k}^2} \\
	      & \quad \times \sqrt{\| \hat{w}-P_{k-1} \hat{w} \|_{A_k}^2 + \rho^2 \| P_{k-1} \hat{w} \|_{A_k}^2 } \\
	      & = \| v - B_k^{ns}A_k v \|_{A_k} \times \sqrt{\| \hat{w}-P_{k-1} \hat{w} \|_{A_k}^2 + \rho^2 \| P_{k-1} \hat{w} \|_{A_k}^2 }.
\end{align*}
Note that 
\begin{align*}
\| \hat{w}-P_{k-1} \hat{w} \|_{A_k}^2 + \rho^2 \| P_{k-1} \hat{w} \|_{A_k}^2 & = (1-\rho^2) \| \hat{w}-P_{k-1} \hat{w} \|_{A_k}^2 + \rho^2 \| \hat{w} \|_{A_k}^2 \\
\text{(Lemma~\ref{lemma:approx})}	& \leq (1-\rho^2) \eta(\| w \|^2_{A_k} - \| \hat{w} \|_{A_k}^2) + \rho^2 \| \hat{w} \|^2_{A_k} \\
& = \rho^2 \| w \|_{A_k}^2  \quad (\text{choose} \ \rho^2 = \frac{\eta}{1+\eta}).
\end{align*}
Therefore, we have
$
( v - \hat{B}_k[A_kv], w)_{A_k} \leq  \| v - B_k^{ns}A_k v \|_{A_k} \times \rho \| w \|_{A_k}.
$
Moreover, $\tilde{B}_k[A_kv]$~is obtained by Algorithm~\ref{alg:nonlinear-CG}~with $\hat{B}_k[\cdot]$ as the preconditioner, such that by~\eqref{ine:improve}, we have
$
\|v - \tilde{B}_k[A_k v ] \|_{A_k}^2 \leq \|v - \hat{B}_k[A_k v ] \|_{A_k}^2.
$
Then~\eqref{ine:vs_ns}~follows.
\end{proof}

\subsection{Uniform Convergence under  Assumptions~\ref{asmp:approx}~and~\ref{asmp:smooth}}
Now, we show the uniform convergence of the nonlinear AMLI-cycle MG method under Assumptions~\ref{asmp:approx}~and~\ref{asmp:smooth}.  Under these assumptions, it is well known that the $\backslash$-cycle MG method and the V-cycle MG methods are both uniformly convergent (see, e.g. \cite{Xu.J1989,Vassilevski.P2008}).  Therefore, thanks to the comparison results in the previous sections, the uniform convergence of the nonlinear AMLI-cycle MG method follows directly. 

Next the theorem shows the uniform convergence of the nonsymmetric nonlinear AMLI-cycle MG method under Assumptions~\ref{asmp:approx}~and~\ref{asmp:smooth}.
\begin{theorem} \label{thm:ns_uniform}
Let~$\hat{B}_k^{ns}[\cdot]$~be defined by Algorithm~\ref{alg:ns-AMLI}, and let~$\tilde{B}_k^{ns}[\cdot]$~be implemented as in Algorithm~\ref{alg:nonlinear-CG} with~$\hat{B}_k^{ns}[\cdot]$~as the preconditioner. Assume that  Assumptions~\ref{asmp:approx}~and~\ref{asmp:smooth} hold.  Then we have the following uniform estimates:
\begin{align}
\| v - \hat{B}^{ns}_k[A_kv] \|_{A_k}^2  &\leq \delta \| v \|_{A_k}^2,  \label{ine:ns_hatB_uniform} \\
\| v - \tilde{B}^{ns}_k[A_kv] \|_{A_k}^2  &\leq \delta^n \| v \|_{A_k}^2,   \label{ine:ns_tildeB_uniform}
\end{align}
where $\delta = \frac{c_1}{c_1+c_2} < 1$, which is a constant independent of~$k$.
\end{theorem}

\begin{proof}
\eqref{ine:ns_hatB_uniform}~and~\eqref{ine:ns_tildeB_uniform}~follow directly from the comparison Theorem \ref{thm:ns_compare}, Theorem \ref{thm:converge-NPCG}, and the uniform convergence results of the $\backslash$-cycle MG method under Assumptions~\ref{asmp:approx} and \ref{asmp:smooth}.
\end{proof}

In the next theorem, we study the uniform convergence of the symmetric nonlinear AMLI-cycle MG under  Assumptions~\ref{asmp:approx}~and~\ref{asmp:smooth}.
\begin{theorem} \label{thm:uniform}
Let~$\hat{B}_k[\cdot]$~be defined by Algorithm~\ref{alg:AMLI}, and let~$\tilde{B}_k[\cdot]$~be implemented as in Algorithm~\ref{alg:nonlinear-CG} with~$\hat{B}_k[\cdot]$~as the preconditioner. Assume that  Assumptions~\ref{asmp:approx}~and~\ref{asmp:smooth}~hold.  Then we have the following uniform estimates:
\begin{align}
\| v - \hat{B}_k[A_kv] \|_{A_k}^2  &\leq \delta \| v \|_{A_k}^2,  \label{ine:hatB_uniform} \\
\| v - \tilde{B}_k[A_kv] \|_{A_k}^2  &\leq \delta^n \| v \|_{A_k}^2,   \label{ine:tildeB_uniform}
\end{align}
where $\delta = \frac{c_1}{c_1+c_2} < 1$ is a constant independent on~$k$.
\end{theorem}
\begin{proof}
According to Theorem~\ref{thm:AMLI_vs_V}~and the uniform convergence results of the V-cycle MG method under Assumptions~\ref{asmp:approx} and \ref{asmp:smooth}, we have
$
(v - \hat{B}_k[A_kv], v)_{A_k} \leq (v - B_kA_kv, v)_{A_k} \leq \delta^{1/2} (v,v)_{A_k}.
$
Then~\eqref{ine:hatB_uniform}~follows directly.  In addition, \eqref{ine:tildeB_uniform}~follows from \eqref{ine:improve}, \eqref{ine:hatB_uniform} and Theorem~\ref{thm:converge-NPCG}.
\end{proof}

In Theorems~\ref{thm:ns_uniform}~and~\ref{thm:uniform}, the full version of the nonlinear PCG is (implicitly) assumed. However, it is clear that as we only use the minimization property~\eqref{ine:improve}~in the proof,  the final result also holds for any truncated version of the nonlinear PCG.
Therefore, we have the following two corollaries regarding the uniform convergence of the nonlinear AMLI-cycle MG method using truncated versions of the 
nonlinear PCG method.
\begin{corollary}
Let~$\hat{B}_k^{ns}[\cdot]$~be defined by Algorithm~\ref{alg:ns-AMLI}, and let~$\tilde{B}_k^{ns}[\cdot]$~be implemented in a truncated version of Algorithm~\ref{alg:nonlinear-CG} 
with~$\hat{B}_k^{ns}[\cdot]$~as the preconditioner. Assume that  Assumptions~\ref{asmp:approx}~and~\ref{asmp:smooth}~hold.  Then we have the following uniform estimates:
\begin{equation*}
\| v - \hat{B}^{ns}_k[A_kv] \|_{A_k}^2  \leq \delta \| v \|_{A_k}^2, \qquad
\| v - \tilde{B}^{ns}_k[A_kv] \|_{A_k}^2  \leq \delta^n \| v \|_{A_k}^2, 
\end{equation*}
where $\delta = \frac{c_1}{c_1+c_2} < 1$ is a constant independent on~$k$.
\end{corollary}

\begin{corollary}
Let~$\hat{B}_k[\cdot]$~be defined by Algorithm~\ref{alg:AMLI}, and let~$\tilde{B}_k[\cdot]$~be implemented in a truncated version of Algorithm~\ref{alg:nonlinear-CG} with $\hat{B}_k[\cdot]$ as the preconditioner. Assume that  Assumptions~\ref{asmp:approx} and \ref{asmp:smooth}~hold.  Then we have the following uniform estimates:
\begin{equation*}
\| v - \hat{B}_k[A_kv] \|_{A_k}^2  \leq \delta \| v \|_{A_k}^2,  \qquad
\| v - \tilde{B}_k[A_kv] \|_{A_k}^2  \leq \delta^n \| v \|_{A_k}^2, 
\end{equation*}
where $\delta = \frac{c_1}{c_1+c_2} < 1$ is a constant independent on~$k$.
\end{corollary}

\begin{remark}
In \cite{Notay.Y;Vassilevski.P2008}, the uniform convergence of the nonlinear AMLI-cycle MG (only for~$\tilde{B}_k[\cdot]$) is shown if the number of nonlinear PCG iterations is chosen to be sufficiently large (under a certain assumption on the boundedness of the V-cycle MG with bounded-level difference). However, this condition is not needed for above results.  Our uniform convergence results hold for arbitrary choices of the number of the nonlinear PCG iterations, but instead our results require Assumptions~\ref{asmp:approx}~and~\ref{asmp:smooth}.
\end{remark}  

\subsection{Uniform Convergence without Assumptions~\ref{asmp:approx}~and~\ref{asmp:smooth}}
So far, the convergence results suggest that not only $\tilde{B}_k[\cdot]$ but also $\hat{B}_k[\cdot]$ converges uniformly under Assumptions~\ref{asmp:approx}~and~\ref{asmp:smooth}. A natural question arises, under the same assumption on the bounded convergence factor 
 of the V-cycle MG with the bounded-level difference, $k_0$, used in \cite{Notay.Y;Vassilevski.P2008}, does the nonlinear operator $\hat{B}_k[\cdot]$ converge uniformly when $n$ is sufficiently large? The following two theorems each gives a positive answer to this question. This is a (slight) generalization of the result in \cite{Notay.Y;Vassilevski.P2008} with a simpler proof.

For the sake of simplicity, let us assume that the convergence factor of the two-grid method ($k_0 = 1$) is independent of $k$. The more general case, when the convergence factor of the V-cycle MG with bounded-level difference $k_0$ is independent of $k$, can be analyzed similarly.  

\begin{theorem}\label{thm:ns_uniform_n}
Let~$\hat{B}_k^{ns}[\cdot]$~be defined by Algorithm~\ref{alg:ns-AMLI}, and let~$\tilde{B}_k^{ns}[\cdot]$~be implemented as in Algorithm~\ref{alg:nonlinear-CG} with~$\hat{B}_k^{ns}[\cdot]$~as the preconditioner. Assume that the convergence factor of the two-grid method is bounded by $\bar{\delta} \in [0, 1)$ which is independent of $k$.  Let $n$, the number of iterations of the nonlinear PCG method, be chosen such that the inequality
\begin{equation}\label{ine:n}
(1-\delta^n)\bar{\delta} + \delta^n \leq \delta
\end{equation}
has a solution $\delta \in [0,1)$. A sufficient condition for this is
\begin{equation}\label{ine:n_large}
n > \frac{1}{1-\bar{\delta}}.
\end{equation}
Then we have the following uniform estimates:
\begin{align}
\| v - \hat{B}^{ns}_k[A_kv] \|_{A_k}^2  &\leq \delta \| v \|_{A_k}^2,  \label{ine:ns_hatB_uniform_n} \\
\| v - \tilde{B}^{ns}_k[A_kv] \|_{A_k}^2  &\leq \delta^n \| v \|_{A_k}^2,   \label{ine:ns_tildeB_uniform_n}
\end{align}
where $\delta$ is independent of $k$.
\end{theorem}

\begin{proof}
We prove the estimates by mathematical induction. The results hold for $k=1$ trivially. Assume that~\eqref{ine:ns_hatB_uniform_n} and \eqref{ine:ns_tildeB_uniform_n} hold for $k-1$, and let $\hat{v} = (I - R_kA_k)v$. Similar to Theorem~\ref{thm:ns_compare_n}, we have
\begin{equation*}
\|v - \hat{B}^{ns}_k[A_k v ] \|_{A_k}^2  \leq ((1-\delta^n) \bar{\delta} + \delta^n) \| v \|_{A_k}^2  \leq \delta \| v \|_{A_k}^2. 
\end{equation*}
This shows that estimate~\eqref{ine:ns_hatB_uniform_n}~holds. Moreover, according to Theorem~\ref{thm:converge-NPCG} and when $f = A_kv$ in~\eqref{ine:converge-NPCG}, the estimate~\eqref{ine:ns_tildeB_uniform_n}~follows directly. 

Now we show that~\eqref{ine:n_large} implies the existence of a $\delta$, which solves~\eqref{ine:n}. Solving \eqref{ine:n} is equivalently to solving
$
\phi(\delta) \equiv  (1+\delta + \delta^2 + \cdots + \delta^{n-1})\bar{\delta} - (\delta + \delta^2 + \cdots + \delta^{n-1}) \leq 0,
$
as $\phi(\delta)(1-\delta) = (1-\delta^n)\bar{\delta} + \delta^n - \delta$.  Due to~\eqref{ine:n_large}, $\phi(1) = n \bar{\delta} - (n-1) = 1 - n (1-\bar{\delta})
< 0$, and $\phi(0) = \bar{\delta} > 0$.  Therefore, there is a $\delta^* \in [0,1)$  such that $\phi(\delta^*) = 0$. 
Then any $\delta \in [ \delta^*, 1)$ will satisfy~\eqref{ine:n}.
\end{proof}

A similar result is obtained for symmetric case as the following theorem shows. 

\begin{theorem} \label{thm:uniform_n}
Let~$\hat{B}_k[\cdot]$~be defined by Algorithm~\ref{alg:AMLI}, and let~$\tilde{B}_k[\cdot]$~be implemented as in Algorithm~\ref{alg:nonlinear-CG} with~$\hat{B}_k[\cdot]$~as the preconditioner. Assume that the convergence factor of the two-grid method is bounded by $\bar{\delta} \in [0, 1)$ which is independent of $k$.  Let $n$, the number of iterations of the nonlinear PCG method, be chosen such that the inequality
\begin{equation*}
(1-\delta^n)\bar{\delta} + \delta^n \leq \delta.
\end{equation*}
has a solution $\delta \in [0,1)$.  A sufficient condition for this is
\begin{equation*}
n > \frac{1}{1-\bar{\delta}}.
\end{equation*}
 Then we have the uniform estimates:
\begin{align}
\| v - \hat{B}_k[A_kv] \|_{A_k}^2  &\leq \delta \| v \|_{A_k}^2,  \label{ine:hatB_uniform_n} \\
\| v - \tilde{B}_k[A_kv] \|_{A_k}^2  &\leq \delta^n \| v \|_{A_k}^2,   \label{ine:tildeB_uniform_n}
\end{align}
where $\delta$ is independent of $k$.
\end{theorem}

\begin{proof}
The results hold for $k=1$ trivially.  Assume that~\eqref{ine:hatB_uniform_n}~and~\eqref{ine:tildeB_uniform_n}~hold for level~$k-1$. Similar to Theorem~\ref{thm:compare_n}, we have
$$
( v - \hat{B}_k[A_kv], w)_{A_k} \leq \sqrt{\| \hat{v} - P_{k-1} \hat{v} \|_{A_k}^2 + \delta^n \| P_{k-1}\hat{v}  \|_{A_k}^2} \times \| \hat{w} \|_{A_k},
$$
where~$\hat{v} = (I-R_kA_k)v$~and~$\hat{w} = (I-R_kA_k)w$.  The first term on the right hand side can be estimated by the same argument as in Theorem~\ref{thm:ns_uniform_n}.  Therefore, we have
$
( v - \hat{B}_k[A_kv], w)_{A_k} \leq \delta^{1/2} \| v\|_{A_k} \| w \|_{A_k}, 
$
which implies~\eqref{ine:hatB_uniform_n}.  According to Theorem~\ref{thm:converge-NPCG}, the estimate~\eqref{ine:tildeB_uniform_n}~follows directly.  The existence of $\delta$ has already been shown in Theorem~\ref{thm:ns_uniform_n}. 
\end{proof}

For $k_{0}=1$ and $n =2$, the nonlinear AMLI-cycle MG has the complexity of the W-cycle MG method and the sufficient condition~\eqref{ine:n_large}~becomes
\begin{equation*}
2 = n > \frac{1}{1-\bar{\delta}} \Rightarrow \bar{\delta} < \frac{1}{2}.
\end{equation*}
In conclusion, we have the following result.
\begin{corollary}
If the two-grid method at any level $k$ (with an exact solution at the coarse level $k+1$) has a uniformly bounded convergence rate $\bar{\delta} < \frac{1}{2}$, then the respective nonlinear AMLI-cycle MG with $n=2$ converges uniformly. 
\end{corollary}

\begin{remark}
As Theorem~\ref{thm:converge-NPCG}~holds for both the full and the truncated version of the nonlinear PCG methods, the above uniform convergence estimates hold for both of these methods likewise.
\end{remark}

\section{Numerical Experiments}
In this section, we present some numerical results to illustrate our theoretical results.    
The first model problem we consider here is 
\begin{equation} \label{eqn:pde_model}
\begin{cases}
-\Delta u &= f,  \quad \text{in} \ \Omega,    \\
u &=  0,  \quad  \text{on} \ \partial \Omega,
\end{cases}
\end{equation}
where $\Omega$ is the unit square in $\mathbb{R}^2$. In our numerical experiments, we discretize equation~\eqref{eqn:model}~using the linear 
finite element method with the choice of $f=1$. The domain $\Omega$ is triangulated by uniform refinements, and the mesh size on the finest level is $h = 2^{-k}$ where $k$ is the number of levels used.

In Table~\ref{tab:AMLI_vs_V}, the numerical results of the nonlinear AMLI-cycle MG method and the V-cycle MG methods are presented and compared.  Under the setting of our experiments, Assumptions~\ref{asmp:approx}~and~\ref{asmp:smooth} are satisfied.  Then according to Theorem~\ref{thm:uniform}, both of the nonlinear operators $\hat{B}_k[\cdot]$ and $\tilde{B}_k[\cdot]$ are uniformly convergent, as illustrated by the numerical results shown in Table~\ref{tab:AMLI_vs_V}. Furthermore, $\hat{B}_k$ and $\tilde{B}_k$ are better than $B_k$ in terms of the number of iterations, which agrees with 
Theorem~\ref{thm:AMLI_vs_V}. 
\begin{table}[!htbp]
 \centering 
\caption{\label{tab:AMLI_vs_V} Number of iterations of the V-cycle MG and the nonlinear AMLI-cycle MG. (stopping criteria: relative residual is less than $10^{-6}$; N-PCG($n$): $n$ iterations of the nonlinear PCG is used to define the coarse-level solver $\tilde{B}_{k-1}[\cdot]$)}
\begin{tabular}{|c||c|c|c|c|c|} 
\hline
         &$B_k$ & \multicolumn{2}{c|}{$\hat{B}_k[\cdot]$}&\multicolumn{2}{c|}{$\tilde{B}_k[\cdot]$}  \\\hline \hline
 $k$ &            &   N-PCG(1)  &  N-PCG(2) & N-PCG(1) & N-PCG(2) \\ \hline
 5     &9          &9		  &9         	       &7		  &3             \\
 6     &11       &10		  &10       	       &8		  &4     \\
 7     &12       &11		  &10       	       &9		  &4         \\
 8     &13       &11		  &10                &10		  &4         \\
 9     &13       &12		  &10                &10		  &4         \\
 10  &14       &12		  &10                &11		  &4       \\
 11  &14       &12		  &10                &12		  &4       \\
 12  &14       &13		  &10                &12  		  &4         \\
        \hline
\end{tabular}
\end{table}

The second model problem is a diffusion equation with a large jump in the coefficient: 
 \begin{equation*} 
 \begin{cases}
-\nabla \cdot ( a(x) \nabla u) &= f,  \quad \text{in} \ \Omega,    \\
u &=  0,  \quad  \text{on} \ \partial \Omega,
\end{cases}
\end{equation*}
where $\Omega = (0,1) \times (0,1)$. We have $a(x) = 1$ on $\Omega_1 = (0.25, 0.5) \times (0.25, 0.5)$ and $\Omega_2 = (0.5, 0.75) \times (0.5, 0.75)$, as well as $a(x) = 10^{-6}$ on $\Omega \backslash (\bar{\Omega}_1 \cup \bar{\Omega}_2  )$. The domain $\Omega$ is triangulated by uniform refinements, and the mesh size on the finest level is $h = 2^{-k}$, where $k$ is the number of levels used. In this test problem, we choose $f = 0$, which means the exact solution is $u^* = 0$.  As we know the exact solution, the stopping criteria is $|| u^* - u_i ||_{A} \leq 10^{-6}$ where $u_i$ is the $i$-th iteration of the MG method.

It is well known that the performance of the V-cycle MG methods for this jump coefficient problem will degenerate. Table~\ref{tab:AMLI_vs_V_jump}~confirms this fact. 
For this problem, due to lack of regularity, if one iteration of the nonlinear PCG method is used to define the coarse-level solver, both $\hat{B}_k[\cdot]$ and $\tilde{B}_k[\cdot]$ appear to be nonuniformly convergent. Nevertheless, according to Theorem~\ref{thm:AMLI_vs_V}, both $\hat{B}_k[\cdot]$ and $\tilde{B}_k[\cdot]$ exhibit better convergence 
than does the V-cycle MG. Furthermore, if the number of iterations of the nonlinear PCG methods is sufficiently large ($n=2$ in this case), according to the theoretical results in \cite{Notay.Y;Vassilevski.P2008}, we can expect that the $\tilde{B}_k[\cdot]$ to be uniformly convergent both with respect to the number of levels $k$ and the jumps, as shown by the numerical results in Table~\ref{tab:AMLI_vs_V_jump}. Furthermore, we see that $\hat{B}_k[\cdot]$ also converges uniformly. 

\begin{table}[!htbp]
 \centering 
\caption{\label{tab:AMLI_vs_V_jump} Number of iterations of the V-cycle MG and the nonlinear AMLI-cycle MG for the jump coefficient problem. (stopping criteria: energy norm of error is less than $10^{-6}$; N-PCG($n$): $n$ iterations of the nonlinear PCG is used to define the coarse level solver $\tilde{B}_{k-1}[\cdot]$)}
\begin{tabular}{|c||c|c|c|c|c|} 
\hline
         &$B_k$ & \multicolumn{2}{c|}{$\hat{B}_k[\cdot]$}&\multicolumn{2}{c|}{$\tilde{B}_k[\cdot]$}  \\\hline \hline
 $k$ &            &   N-PCG(1)  &  N-PCG(2) & N-PCG(1) & N-PCG(2) \\ \hline
 5   	&27       &15		  &13         	       &13		  &4             \\
 6      &40       &22		  &14       	       &15		  &5    	 \\
 7   	&49       &29		  &14       	       &20		  &5    	     \\
 8   	&56       &37		  &15       	       &30		  &5    	     \\
 9   	&76       &45		  &15       	       &42		  &5    	     \\
 10  	&102     &55  		  &15      	       &47		  &5    	   \\
        \hline
\end{tabular}
\end{table}

In the last numerical experiment, we use the unsmoothed aggregation AMG (UA-AMG) methods 
to solve the model problem~\eqref{eqn:pde_model} discretized by the linear finite element on uniform meshes. 
Given the $k$-th level matrix $A_k \in \mathbb{R}^{n_k \times n_k}$, in the UA-AMG method we define the prolongation matrix $P_{k-1}^{k}$ from a non-overlapping partition of the $n_k$ unknowns at level $k$ into $n_{k-1}$ nonempty disjoint sets $G_j$, $j=1,\dots, n_{k-1}$, which are referred to as  aggregates. In our numerical experiments, we use Algorithm 2 in \cite{Vanek.P;Mandel.J;Brezina.M1996} to generate the aggregates on each level. Once the aggregates are constructed, the prolongation $P_{k-1}^{k}$ is an $n_{k} \times n_{k-1}$ matrix given by 
\begin{equation*}
(P_{k-1}^{k})_{ij} = 
\begin{cases}
1 & \text{if} \ i \in G_j  \\
0 & \text{otherwise}
\end{cases} \quad
i = 1, \dots, n_k, \quad j = 1, \dots, n_{k-1}. 
\end{equation*}
With such piecewise constant prolongation $P_{k-1}^{k}$, the coarse-level matrix $A_{k-1} \in \mathbb{R}^{n_{k-1} \times n_{k-1}}$ is defined by
$
A_{k-1} = (P_{k-1}^{k})^t A_k (P_{k-1}^{k}).
$ 
As we are now considering AMG methods, we do not have the orthogonal projections $Q_k$ and $P_k$, therefore, we cannot use these projections to define the operators $B_k$, $\hat{B}_k[\cdot]$, and $\tilde{B}_k[\cdot]$. However, thanks to the prolongation $P_{k-1}^{k}$, the V-cycle MG iterator $B_k$ for the UA-AMG method is defined recursively by Algorithm~\ref{alg:V-cycle} with the coarse-grid correction step
$
u_2 = u_1 + P_{k-1}^{k} B_{k-1} (P_{k-1}^{k})^t (f - A_k u_1). 
$ 
Similarly, for UA-AMG method, the nonlinear operator $\hat{B}_k[\cdot]$ is defined by Algorithm~\ref{alg:AMLI} with the coarse-grid correction step
$
u_2 = u_1 +  P_{k-1}^{k} \tilde{B}_{k-1}[(P_{k-1}^{k})^t(f-A_k u_1)]
$ 
and the nonlinear operator $\tilde{B}_k[\cdot]$ is implemented as in Algorithm~\ref{alg:nonlinear-CG} with $\hat{B}_k[\cdot]$ as the preconditioner.

For the model problem~\eqref{eqn:pde_model} discretized by the linear finite element method, it can be shown that the two-grid UA-AMG method converges uniformly.  When what is known as the XZ-identity~\cite{Xu.J;Zikatanov.L2002}~is applied to the two-grid method, the following well-known result is obtained (see, e.g. \cite{Zikatanov.L.2008a,Falgout.R;Vassilevski.P;Zikatanov.L.2005a})
\begin{equation}\label{TG identity}
\| I - B_{TG} A_{k} \|_{A_k}^2 = 1 - \frac{1}{c^2},
\end{equation}
where 
\begin{equation}\label{TG constant}
c^2 = \sup_{\| v\|_{A_k}=1} \inf_{v_{k-1} \in V_{k-1}}  (\bar{R}_k^{-1} (v - v_{k-1}), v- v_{k-1}).  
\end{equation}
For the UA-AMG method, due to the piecewise constant prolongation $P_{k-1}^k$, the entries of the coarse-level matrix $A_k$ behave like $\mathcal{O}((\frac{h_k}{h}))$ instead of the usual $\mathcal{O}(1)$.  Here, $h_k$ is the $k$-th level mesh size, i.e., the diameter of the aggregates, and $h$ is the mesh size of the finest level.  Therefore, we have $\lambda_k:=\rho(A_k)  = \mathcal{O}((\frac{h_k}{h})h_k^{-2})$.  In considering any smoother that is equivalent to the Richardson smoother, we have the smoothing property
\begin{equation*}
(\bar{R}_k^{-1}v,v) \leq c_0 \lambda_k (v,v) \leq c_1 \frac{h_k}{h} h_k^{-2} (v,v).
\end{equation*}
Moreover, by choosing $v_{k-1}$ as the piecewise constant interpolation of $v$ on each aggregate, we obtain the approximation property 
\begin{equation*}
\| v - v_{k-1} \|^2 \leq c_2 h_{k-1}^2 | v |^2_1 \leq c_3 h_{k-1}^2 \frac{h}{h_k} \| v \|^{2}_{A_k},
\end{equation*}
where $| v |_1$ is the standard semi-norm of the Sobolev space $H^1(\Omega)$.  Therefore, the constant $c$ in the two-grid convergence estimates \eqref{TG identity} - \eqref{TG constant} can be estimated by
\begin{align*}
(\bar{R}_k^{-1}(v-v_{k-1}), v-v_{k-1}) \leq c_1\frac{h_k}{h} h_{k}^{-2}(v-v_{k-1}, v-v_{k-1}) \leq c_1c_3 h_k^{-2} h_{k-1}^2 
\| v\|^{2}_{A_k}.
\end{align*}
It is reasonable to assume that between two successive levels the aggregates are quasi-uniform.  Hence $h_{k-1}/h_k \leq c_4$ where $c_4$ is a constant independent on the levels.  Aggregation algorithms that can ensure quasi-uniformity can be found, for example, in \cite{BrezinaVanekVassilevski2012}.

This implies that $c \leq c_1c_3c_4^2$ and does not depend on the mesh size, which shows the uniform convergence of two-grid UA-AMG method.  According to Theorems~\ref{thm:ns_uniform_n} and \ref{thm:uniform_n}, we can expect that when $n$ is sufficiently large, the UA-AMG method with the nonlinear AMLI-cycle MG method converges uniformly.  In fact, for model problem~\eqref{eqn:pde_model}, $n=2$~is sufficient and gives uniformly convergent results. 

The results are shown in Table~\ref{tab:AMLI_vs_V_UA}. We set the maximum size of the aggregates to be $9$.  We can see that if we use the V-cycle MG for UA-AMG, the number of iterations depends strongly on the size of the problem. 
If we use one iteration of the nonlinear PCG to define the coarse level solver, the performance of $\hat{B}_k[\cdot]$ and of $\tilde{B}_k[\cdot]$ each still depends on the size of the problem, but the number of iterations grows considerably less quickly. If we use $n=2$ iterations, $\hat{B}_k[\cdot]$ and $\tilde{B}_k[\cdot]$ each exhibits uniform convergence. 

\begin{table}[!htbp]
 \centering 
\caption{\label{tab:AMLI_vs_V_UA} Number of iterations of the V-cycle MG and the nonlinear AMLI-cycle MG for the UA-AMG method. (stopping criteria: relative residual is less than $10^{-6}$; N-PCG($n$): $n$ iterations of the nonlinear PCG is used to define the coarse-level solver $\tilde{B}_{k-1}[\cdot]$)}
\begin{tabular}{|c||c|c|c|c|c|} 
\hline
         &$B_k$ & \multicolumn{2}{c|}{$\hat{B}_k[\cdot]$}&\multicolumn{2}{c|}{$\tilde{B}_k[\cdot]$}  \\\hline \hline
 Size     	&            &   N-PCG(1)  &  N-PCG(2) & N-PCG(1) & N-PCG(2) \\ \hline
 3,969    	&100       &48		  &40         	       &34		  &9             \\
 16,129  	&244       &70		  &41       	       &38		  &9    	 \\
 65,025  	&519       &94		  &41       	       &56		  &9    	     \\
 261,121	&713       &93		  &41       	       &63		  &9    	     \\
 1,046,529&1753    &112		  &40      	       &93		  &9    	     \\
        \hline
\end{tabular}
\end{table}

The last experiments demonstrate the potential of the nonlinear AMLI-cycle MG methods in cases where the constructed hierarchy of the interpolation matrices 
is not energy stable. In many cases, it is straightforward to come up with simple (e.g., block--diagonal) interpolation matrices.  However, these lead to a V-cycle MG that generally has level--dependent convergence. The nonlinear AMLI-cycle can  
be used in such instances to substantially improve convergence (cf., e.g., \cite{Lashuk.I;Vassilevski.P2008}). 

\section*{Acknowledgement} We would like to thank Professor Ludmil Zikatanov at Penn State University for generously sharing valuable insights that we drew on in writing this paper. The valuable and careful comments offered by anonymous referees are also gratefully acknowledged.

\bibliographystyle{siam}
\bibliography{NonlinearAMLI}

\begin{thebibliography}{10}

\bibitem{Axelsson.O;Vassilevski.P1989a}
{\sc O.~Axelsson and P.~S. Vassilevski}, {\em Algebraic multilevel
  preconditioning methods {I}}, Numer. Math., 56 (1989), pp.~157--177.

\bibitem{Axelsson.O;Vassilevski.P1990}
\leavevmode\vrule height 2pt depth -1.6pt width 23pt, {\em Algebraic multilevel
  preconditioning methods {II}}, SIAM J. Numer. Anal., 27 (1990),
  pp.~1569--1590.

\bibitem{Axelsson.O;Vassilevski.P.1991a}
\leavevmode\vrule height 2pt depth -1.6pt width 23pt, {\em Asymptotic work
  estimates for {AMLI} methods}, Appl. Numer. Math., 7 (1991), pp.~437--451.

\bibitem{Axelsson.O;Vassilevski.P1991a}
\leavevmode\vrule height 2pt depth -1.6pt width 23pt, {\em A black box
  generalized conjugate gradient solver with inner iterations and variable-step
  preconditioning}, SIAM J. Matrix Anal. Appl., 12 (1991), pp.~625--644.

\bibitem{Axelsson.O;Vassilevski.P1994}
\leavevmode\vrule height 2pt depth -1.6pt width 23pt, {\em Variable-step
  multilevel preconditioning methods {I}. {S}elf-adjoint and positive definite
  elliptic problems}, Numer. Linear Algebra Appl., 1 (1994), pp.~75--101.

\bibitem{Bramble.J1993}
{\sc J.~Bramble}, {\em Multigrid {M}ethods}, Chapman \& Hall/CRC, 1993.

\bibitem{Brandt.A;McCormick.S;Ruge.J1985}
{\sc A.~Brandt, S.~McCormick, and J.~Ruge}, {\em Algebraic multigrid ({AMG})
  for sparse matrix equations}, in Sparsity and its applications
  ({L}oughborough, 1983), Cambridge Univ. Press, Cambridge, 1985, pp.~257--284.

\bibitem{Brezina.M;Cleary.A;Falgout.R;Henson.V;Jones.J;Manteuffel.T;McCormick.S;Ruge.J2001}
{\sc M.~Brezina, A.~J. Cleary, R.~D. Falgout, V.~E. Henson, J.~E. Jones, T.~A.
  Manteuffel, S.~F. McCormick, and J.~W. Ruge}, {\em Algebraic multigrid based
  on element interpolation ({AMG}e)}, SIAM J. Sci. Comput., 22 (2001),
  pp.~1570--1592.

\bibitem{Brezina.M;Falgout.R;MacLachlan.S;Manteuffel.T;McCormick.S;Ruge.J2005}
{\sc M.~Brezina, R.~Falgout, S.~MacLachlan, T.~Manteuffel, S.~McCormick, and
  J.~Ruge}, {\em Adaptive smoothed aggregation {$(\alpha{\rm SA})$} multigrid},
  SIAM Rev., 47 (2005), pp.~317--346.

\bibitem{BrezinaVanekVassilevski2012}
{\sc M.~Brezina, P.~Van{\v e}k, and P.~S. Vassilevski}, {\em An improved
  convergence analysis of smoothed aggregation algebraic multigrid}, Numer.
  Linear Algebra Appl., 19 (2012), pp.~441--469.

\bibitem{Briggs.W;McCormick.S2000}
{\sc W.~L. Briggs and S.~F. McCormick}, {\em A multigrid tutorial}, Society for
  Industrial Mathematics, 2000.

\bibitem{Chartier.T;Falgout.R;Henson.V;Jones.J;Manteuffel.T;McCormick.S;Ruge.J;Vassilevski.P2003}
{\sc T.~Chartier, R.~D. Falgout, V.~E. Henson, J.~Jones, T.~Manteuffel,
  S.~McCormick, J.~Ruge, and P.~S. Vassilevski}, {\em Spectral {AMG}e
  ({$\rho$}{AMG}e)}, SIAM J. Sci. Comput., 25 (2003), pp.~1--26.

\bibitem{Falgout.R;Vassilevski.P;Zikatanov.L.2005a}
{\sc R.~D. Falgout, P.~S. Vassilevski, and L.~T. Zikatanov}, {\em On two-grid
  convergence estimates}, Numer. Linear Algebra Appl., 12 (2005), pp.~471--494.

\bibitem{Golub.G;Ye.Q1999a}
{\sc G.~H. Golub and Q.~Ye}, {\em Inexact preconditioned conjugate gradient
  method with inner-outer iteration}, SIAM J. Sci. Comput., 21 (1999/00),
  pp.~1305--1320 (electronic).

\bibitem{Hackbusch.W1985}
{\sc W.~Hackbusch}, {\em {Multi-grid methods and applications}}, vol.~4,
  Springer Verlag, 1985.

\bibitem{Kraus.J2002}
{\sc J.~K. Kraus}, {\em An algebraic preconditioning method for {$M$}-matrices:
  linear versus non-linear multilevel iteration}, Numer. Linear Algebra Appl.,
  9 (2002), pp.~599--618.

\bibitem{Lashuk.I;Vassilevski.P2008}
{\sc I.~Lashuk and P.~S. Vassilevski}, {\em On some versions of the element
  agglomeration {AMG}e method}, Numer. Linear Algebra Appl., 15 (2008),
  pp.~595--620.

\bibitem{McCormick.S1984}
{\sc S.~F. McCormick}, {\em Multigrid methods for variational problems: further
  results}, SIAM J. Numer. Anal., 21 (1984), pp.~255--263.

\bibitem{McCormick.S1985}
\leavevmode\vrule height 2pt depth -1.6pt width 23pt, {\em Multigrid methods
  for variational problems: general theory for the {$V$}-cycle}, SIAM J. Numer.
  Anal., 22 (1985), pp.~634--643.

\bibitem{Notay.Y2000}
{\sc Y.~Notay}, {\em Flexible conjugate gradients}, SIAM J. Sci. Comput., 22
  (2000), pp.~1444--1460 (electronic).

\bibitem{Notay.Y;Vassilevski.P2008}
{\sc Y.~Notay and P.~S. Vassilevski}, {\em Recursive {K}rylov-based multigrid
  cycles}, Numer. Linear Algebra Appl., 15 (2008), pp.~473--487.

\bibitem{Ruge.J;Stuben.K1987}
{\sc J.~W. Ruge and K.~St{{\"u}}ben}, {\em Algebraic multigrid}, in Multigrid
  {M}ethods, vol.~3 of Frontiers Appl. Math., SIAM, Philadelphia, PA, 1987,
  pp.~73--130.

\bibitem{Saad.Y2003}
{\sc Y.~Saad}, {\em Iterative methods for sparse linear systems}, Society for
  Industrial and Applied Mathematics, Philadelphia, PA, second~ed., 2003.

\bibitem{Shauidurov.V1995}
{\sc V.~V. Sha{\u\i}durov}, {\em Multigrid methods for finite elements},
  vol.~318 of Mathematics and its Applications, Kluwer Academic Publishers
  Group, Dordrecht, 1995.
\newblock Translated from the 1989 Russian original by N. B. Urusova and
  revised by the author.

\bibitem{Trottenberg.U;Oosterlee.C;Schuller.A2001}
{\sc U.~Trottenberg, C.~Oosterlee, and A.~Sch{\"u}ller}, {\em Multigrid},
  Academic Press, 2001.

\bibitem{Vanek.P;Mandel.J;Brezina.M1996}
{\sc P.~Van{\v{e}}k, J.~Mandel, and M.~Brezina}, {\em Algebraic multigrid by
  smoothed aggregation for second and fourth order elliptic problems},
  Computing, 56 (1996), pp.~179--196.
\newblock International GAMM-Workshop on Multi-level Methods (Meisdorf, 1994).

\bibitem{Vassilevski.P1992b}
{\sc P.~S. Vassilevski}, {\em Hybrid {$V$}-cycle algebraic multilevel
  preconditioners}, Math. Comp., 58 (1992), pp.~489--512.

\bibitem{Vassilevski.P2008}
\leavevmode\vrule height 2pt depth -1.6pt width 23pt, {\em Multilevel block
  factorization preconditioners}, Springer, New York, 2008.

\bibitem{Xu.J1989}
{\sc J.~Xu}, {\em {Theory of multilevel methods}}, PhD thesis, Cornell
  University Ithaca, NY, 1989.

\bibitem{Xu.J1992}
\leavevmode\vrule height 2pt depth -1.6pt width 23pt, {\em Iterative methods by
  space decomposition and subspace correction}, SIAM Rev., 34 (1992),
  pp.~581--613.

\bibitem{Xu.J;Zikatanov.L2002}
{\sc J.~Xu and L.~Zikatanov}, {\em The method of alternating projections and
  the method of subspace corrections in {H}ilbert space}, J. Amer. Math. Soc.,
  15 (2002), pp.~573--597 (electronic).

\bibitem{Xu.J;Zikatanov.L2004}
\leavevmode\vrule height 2pt depth -1.6pt width 23pt, {\em On an energy
  minimizing basis for algebraic multigrid methods}, Comput. Vis. Sci., 7
  (2004), pp.~121--127.

\bibitem{Zikatanov.L.2008a}
{\sc L.~T. Zikatanov}, {\em Two-sided bounds on the convergence rate of
  two-level methods}, Numer. Linear Algebra Appl., 15 (2008), pp.~439--454.

\end{thebibliography}

\end{document}